\newtheorem{remark}{Remark}
\newtheorem{assumption}{Assumption}
\newtheorem{theorem}{Theorem}
\newtheorem{proposition}{Proposition}
\newcolumntype{C}{>{\centering\arraybackslash}p{2.5cm}}
\newcommand{\averagel}{\{\!\!\{}
\newcommand{\averager}{\}\!\!\}}
\newcommand{\jumpl}{[\![}
\newcommand{\jumpr}{]\!]}
\newcommand{\partition}{\mathcal{T}_h}
\newcommand{\facesinternal}{\mathcal{F}^\mathrm{I}_h}
\newcommand{\faces}{\mathcal{F}_h}
\newcommand{\facesN}{\mathcal{F}_h^N}
\newcommand{\facesD}{\mathcal{F}_h^D}
\newcommand{\facesboundary}{\mathcal{F}^\mathrm{B}_h}
\newcommand{\Wh}{W_{h,p}^\mathrm{DG}}
\DeclareMathAlphabet{\mathcalligra}{T1}{calligra}{m}{n}
\title{Structure Preserving Polytopal Discontinuous Galerkin Methods for the Numerical Modeling of Neurodegenerative Diseases 
\footnote{\textbf{Funding}: PFA has been partially funded by the research grants PRIN2017 n. 201744KLJL funded by MUR and PRIN2020 n. 20204LN5N5 funded by MUR. PPFA has been partially supported by ICSC—Centro Nazionale di Ricerca in High Performance Computing, Big Data, and Quantum Computing funded by European Union—NextGenerationEU. FB is partially funded by “INdAM - GNCS Project”, codice CUP E53C22001930001. MC, FB and PFA are members of INdAM-GNCS.}}
\author[1]{Mattia Corti}
\affil[1]{MOX-Dipartimento di Matematica, Politecnico di Milano, Piazza Leonardo da Vinci 32, Milan, 20133, Italy}
\author[1]{Francesca Bonizzoni}
\author[1]{Paola F. Antonietti}
\begin{document}
\maketitle

\begin{abstract}
Many neurodegenerative diseases are connected to the spreading of misfolded prionic proteins. In this paper, we analyse the process of misfolding and spreading of both $\alpha$-synuclein and Amyloid-$\beta$, related to Parkinson's and Alzheimer's diseases, respectively. We introduce and analyze a positivity-preserving numerical method for the discretization of the Fisher-Kolmogorov equation, modelling accumulation and spreading of prionic proteins. The proposed approximation method is based on the discontinuous Galerkin method on polygonal and polyhedral grids for space discretization and on $\vartheta-$method time integration scheme. We prove the existence of the discrete solution and a convergence result where the Implicit Euler scheme is employed for time integration. We show that the proposed approach is structure-preserving, in the sense that it guaranteed that the discrete solution is non-negative, a feature that is of paramount importance in practical application. The numerical verification of our numerical model is performed both using a manufactured solution and considering wavefront propagation in two-dimensional polygonal grids. Next, we present a simulation of $\alpha$-synuclein spreading in a two-dimensional brain slice in the sagittal plane. The polygonal mesh for this simulation is agglomerated maintaining the distinction of white and grey matter, taking advantage of the flexibility of PolyDG methods in the mesh construction. Finally, we simulate the spreading of Amyloid-$\beta$ in a patient-specific setting by using a three-dimensional geometry reconstructed from magnetic resonance images and an initial condition reconstructed from positron emission tomography. Our numerical simulations confirm that the proposed method is able to capture the evolution of Parkinson's and Alzheimer's diseases.
\end{abstract} 

\section{Introduction}

A neurodegenerative disease is a process that causes the progressive death or function loss of neurons. Many different pathologies belong to this group and some of them are called proteinopathies because their aetiology involves misfolding and aggregation of prions into toxic and insoluble proteins \cite{walker_neurodegenerative_2015}. Typical examples of proteins that undergo this process are $\alpha$-Synuclein, related to Parkinson's disease \cite{stefanis-alpha-syn}, and Amyloid-$\beta$, whose aggregation is a triggering mechanism of Alzheimer's disease \cite{bloom2014amyloid}.
\par
Recently, the mathematical modelling of prion dynamics has been studied to elucidate how the physical processes at the basis of the agglomeration and diffusion processes can be related to complex brain structure and functioning. A mathematical description of the spreading at the macroscopic level can be a useful tool in clinical practice, where the use of positron emission tomography imaging (PET) is often considered too invasive and expensive \cite{vanoostveenImagingTechniquesAlzheimer2021}. Moreover, for some pathologies, like $\alpha$-sinucleopathies, there not exist satisfactory chemical ligands \cite{korat_alpha-synuclein_2021} that prevent diagnostic investigations, and this computed-assisted modelling is mandatory. 
\par
Concerning the numerical modelling of neurodegeneration, the most diffused mathematical description of this phenomenon is based on the Fisher-Kolmogorov (FK) equation (also known as Fisher-KPP) \cite{fisher-1937, kolmogorov-1937}. This model is a nonlinear diffusion-reaction equation that is particularly suited to describe biological species' evolution \cite{fornari_prion-like_2019, corti2023uncertainty}. Many different numerical methods have been proposed to compute the approximate solution of the FK equation, also in the context of brain neurodegeneration. For example, we recall Finite Element Methods (FEM) \cite{weickenmeierPhysicsbasedModelExplains2019, engwer_estimating_2021}, Boundary Elements Methods (BEM) \cite{gortsas_local_2022}, and Discontinuous Galerkin (DG) methods \cite{corti:FK}.
\par
In the context of modelling neurodegenerative disorders, the solution $c$ of the FK problem represents the (relative) concentration of misfolded proteins, which needs to be non-negative. It can be shown that in the continuous formulation, the solution of the FK equation has two equilibrium states: $c=1$ and $c=0$ \cite{salsa:EDP}. However, due to the unstable nature of the second equilibrium, at the discrete level, it is fundamental to construct a positivity-preserving numerical method to avoid numerical instabilities that lead to unphysical (negative) numerical solutions \cite{corti:FK}. For this reason, some works analyze the construction of suitable positivity-preserving methods both within the context of finite differences \cite{macias-diaz_explicit_2012} and DG \cite{bonizzoni_structure-preserving_2020} methods. The latter work uses a change of variable based on the exponential transformation to ensure positivity and entropy preservation at the discrete level.
\par
Starting from the high-order idea of \cite{bonizzoni_structure-preserving_2020} - limited to simplicial meshes - in this work we present and analyse a DG formulation on polygonal/polyhedral grids (PolyDG). 
The proposed approach presents several advantages and novelties: (i) The flexibility in the construction of the mesh, based on mesh agglomeration \cite{manuzzi:CNN}. This plays an important role, especially because of the complexity of the geometrical domain of the application at hand, i.e., the human brain; (ii) The freedom in the choice of discretization parameters, like the polynomial degree, which might locally change, from element to element \cite{antonietti_highorder_2021}. In the context of brain neurodegeneration, where the geometrical complexity of the domain is an issue, the use of elementwise approximation orders allows us to reduce the computational cost, without affecting the correctness of wavefront propagation; (iii) The use of higher-order time integration. Indeed, to the timescales of the brain neurodegeneration process (that typically means over decades), the use of low-order time integration methods is not convenient to catch the wave propagation correctly. For this reason, we adopt a second-order time integration strategy; (iv) Finally, we consider spatially varying and discontinuous physical parameters, which are fundamental to correctly describe the axonal diffusion of prionic proteins \cite{weickenmeierPhysicsbasedModelExplains2019, brennan_role_2023}.
\par
From the point of view of the analysis, we extend the proof of the existence of the numerical solution provided in~\cite{bonizzoni_structure-preserving_2020} for the implicit Euler method, to the generic $\vartheta$-method. The proof of existence is based on the use of the Leray-Shauder fixed point theorem and relies on the coercivity and continuity of the diffusion term. Even though the convergence of the fully discrete numerical solution to the analytical one is not theoretically proved, it is numerically demonstrated in the case $\vartheta=0.5$ (Crank-Nicolson (CN) scheme), with application to brain neurodegenerative diseases, and it is shown that it outperforms first-order advancing schemes.
\par
Concerning the application to the modelling of neurodegenerative disorders, the typical solution of the FK model is a wavefront propagating inside the brain geometry. For this reason, we analyze also the capabilities of our method in approximating wavefronts, providing also a comparison with the DG method proposed in \cite{corti:FK}, which is proven to suffer possible instabilities due to the fact it does not preserve the positivity when low order polynomial degrees are employed.
\par
The paper is organized as follows. In Section~\ref{sec:model}, we introduce the FK mathematical model and discuss its application to neurodegeneration. In Section~\ref{sec:discret}, we introduce the PolyDG space discretization and the time discretization using the $\vartheta$-method. Moreover, we show the coercivity and continuity of the variational forms in order to prove the existence of the discrete solution, and we discuss the extension of the convergence results of the fully discrete formulation. In Section~\ref{sec:numericalresults}, we present some convergence tests with a known exact solution and we discuss the accuracy of the proposed scheme in approximating travelling waves in a two-dimensional setting, making a comparison with the DG method of \cite{corti:FK}. Section~\ref{sec:numericalresultsbrain} is dedicated to the application of the proposed method to $\alpha$-Synuclein spreading in Parkinson's disease in a two-dimensional framework, employing agglomerated polygonal meshes, and Amyloid-$\beta$ in Alzheimer's disease in a three-dimensional patient-specific geometry, with initial conditions reconstructed from PET images. Finally, in Section~\ref{sec:conclusion}, we draw some conclusions and discuss future developments.

\section{The mathematical model}
\label{sec:model}

In this section, we present the FK equation to describe the reaction and diffusion of misfolded proteins. Given the final time $T>0$, the problem depends on the time $t\in(0,T]$ and space $\boldsymbol{x}\in\Omega\subset\mathbb{R}^d$ ($d=2,3$) variables. The unknown is the relative concentration of the misfolded protein $c=c(\boldsymbol{x},t)$, taking values in the interval $[0,1]$. A detailed derivation of the model can be found in \cite{weickenmeierPhysicsbasedModelExplains2019}. The problem in its strong formulation reads as follows: Find $c=c(\boldsymbol{x},t)$ such that:
\begin{equation}
 \begin{dcases}
     \dfrac{\partial c}{\partial t} =\nabla \cdot(\mathbf{D} \nabla\, c) + \alpha\,c(1-c) + f,
    & \mathrm{in}\ \Omega\times(0,T],
    \\[8pt]
    (\mathbf{D}\nabla c) \cdot \boldsymbol{n} = 0, & \mathrm{on}\;\Gamma_N\times(0,T],
    \\[8pt]
    c = c_\mathrm{D}, & \mathrm{on}\;\Gamma_D\times(0,T],
    \\[8pt]
    c(0)=c_0, & \mathrm{in}\;\Omega,
    \\[8pt]
\end{dcases}
\label{eq:fk_strong}
\end{equation}
where $\alpha=\alpha(\boldsymbol{x})$ is the reaction parameter, representing the local conversion rate of the proteins from the healthy to the misfolded state, modelling also on the clearance mechanisms \cite{ringstad_brain-wide_nodate,hornkjol_csf_2022}, and $\mathbf{D}=\mathbf{D}(\boldsymbol{x})\in\mathbb{R}^{d\times d}$ is the diffusion tensor, denoting the spreading of misfolded protein. Finally, $f=f(\boldsymbol{x},t)$ is the forcing term modelling the external addition of mass. Concerning the boundary conditions, we impose null flux at the boundary $\Gamma_N$ of the domain, while $c_\mathrm{D}$ fixes the value of concentration on a part of the boundary $\Gamma_D$, where $\{\Gamma_D,\Gamma_N\}$ form a partition of $\partial\Omega$, namely, $\Gamma_D \cup \Gamma_N = \partial \Omega$, $\Gamma_D \cap \Gamma_N = \emptyset$, and $|\Gamma_D|>0$. 
\par
Due to the physical meaning of the solution $c$, we aim to construct a positivity-preserving numerical scheme. Following \cite{bonizzoni_structure-preserving_2020}, we apply the exponential transformation $c=e^\lambda$, where $\lambda=\lambda(\boldsymbol{x},t)$ becomes the new unknown of the problem. As a result, we obtain the following strong formulation of the problem: Find $\lambda=\lambda(\boldsymbol{x},t)$ such that:

\begin{equation}
 \begin{dcases}
     \dfrac{\partial e^\lambda}{\partial t} =\nabla \cdot(e^\lambda \mathbf{D} \nabla\, \lambda) + \alpha\,e^\lambda(1-e^\lambda) + f,
    & \mathrm{in}\,\Omega\times(0,T],
    \\[8pt]
    (\mathbf{D}\nabla \lambda) \cdot \boldsymbol{n} = 0, & \mathrm{on}\;\Gamma_N\times(0,T],
    \\[8pt]
    \lambda = \lambda_\mathrm{D}, & \mathrm{on}\;\Gamma_D\times(0,T],
    \\[8pt]
    \lambda(0)=\lambda_0, & \mathrm{in}\;\Omega,
    \\[8pt]
\end{dcases}
\label{eq:lambda_fk_strong}
\end{equation}
The homogeneous Neumann boundary condition in problem \eqref{eq:fk_strong} reflects a homogeneous Neumann boundary condition also in problem \eqref{eq:lambda_fk_strong}. Concerning the initial condition and the Dirichlet boundary term we impose that $c_0=e^{\lambda_0}$ and $c_\mathrm{D}=e^{\lambda_\mathrm{D}}$, respectively. 

We make the following assumption on the data regularity.
\begin{assumption}[Data's regularity]
We assume the following regularity on the data appearing in \eqref{eq:fk_strong}: 
\begin{itemize}
    \item $\alpha\in L^\infty(\Omega)$; 
    \item $\boldsymbol{\mathrm{D}}\in L^\infty(\Omega,\mathbb{R}^{d\times d})$, and $\exists d_0,D_0>0\;\forall\boldsymbol{\xi}\in \mathbb{R}^d:\; d_0|\boldsymbol{\xi}|^2 \leq \boldsymbol{\xi}^\top\mathbf{D}\boldsymbol{\xi} \leq D_0|\boldsymbol{\xi}|^2 \quad \forall \boldsymbol{\xi}\in\mathbb{R}^d$;
    \item $f\in L^2((0,T],L^2(\Omega))$;
    \item $\lambda_\mathrm{D} \in L^2((0,T]; H^{1/2}(\Gamma_D))$.
\end{itemize} 
\end{assumption}

\section{Numerical discretization}
\label{sec:discret}

This section presents the discretization of the continuous problem \eqref{eq:lambda_fk_strong}, which is based on the polygonal discontinuous Galerkin method for the space discretization and the $\vartheta-$method for the time advancement.

\subsection{Discrete setting and preliminary estimates}

Let $\partition$ be a polytopic mesh partition of the domain $\Omega$, being the collection of disjoint polygonal/polyhedral elements $K$. For each element $K\in \partition$, $|K|$ denotes the Hausdorff measure of the element, and $h_K$ denotes its diameter. We set $h=\max_{K\in\partition} h_K$. 
Given two neighboring elements $K_1,\, K_2\in\partition$, their interface is defined as the intersection of their $(d-1)-$dimensional facets. In the case of $d=2$, the interface is a collection of line segments and the set of all of them is denoted with $\faces$. In the case $d=3$, the interface can be a generic polygon; for this reason, we introduce a decomposition of the polygon in planar triangles collected in the set $\faces$. Finally, we decompose $\faces$ into the union of interior faces ($\facesinternal$) and boundary faces ($\facesboundary$), i.e. $\faces = \facesinternal \cup \facesboundary$. Moreover, we assume that $\facesboundary$ can be further split according to the corresponding boundary condition: $\facesboundary = \facesD \cup \facesN$, where $\facesD$ and $\facesN$ are the boundary faces contained in $\Gamma_D$ and $\Gamma_N$, respectively. The last assumption implies that any $F\in\facesboundary$ is contained in either $\Gamma_D$ or $\Gamma_N$.
\par
\begin{assumption}[Mesh Regularity \cite{dipietro:HHO}]
The mesh sequence $\{\partition\}_h$ satisfies the following properties:
\begin{enumerate}
    \item Shape Regularity: $\forall K\in\partition\;it\;holds:\quad c_1 h_K^d\lesssim q|K|\lesssim  c_2h_K^d$.
    \item Contact Regularity: $\forall F\in\faces$ with $F\subseteq \overline{K}$ for some $K\in\partition$, it holds $h_K^{d-1}\lesssim |F|$, where $|F|$ is the Hausdorff measure of the face $F$.
    \item Submesh Condition: There exists a shape-regular, conforming, matching simplicial submesh $\widetilde{\partition}$ such that:
    \begin{itemize}
        \item $\forall \widetilde{K}\in\widetilde{\partition}\;\exists K\in\partition:\quad \widetilde{K}\subseteq K$.
        \item The family $\{\widetilde{\partition}\}_h$ is shape and contact regular.
        \item $\forall \widetilde{K}\in\widetilde{\partition}, K\in\partition$ with $\widetilde{K} \subseteq K$, it holds $h_K \lesssim h_{\widetilde{K}}$.
    \end{itemize}
\end{enumerate}
\end{assumption}
\begin{remark}
    We remark that most of the analysis is valid also under milder assumptions on the mesh \cite{cangianiVersionDiscontinuousGalerkin2022}; however in this work, we need to refer to the ones in Assumption 2. The technical point is the validity of \eqref{eq:in_tr_in} that holds under mesh assumptions of Assumption 2.3. However, we notice that from the numerical results of Sections 4 and 5, the assumption seems not to be needed in practice. 
\end{remark}
Concerning the space discretization, we introduce the following discontinuous finite element spaces with an elementwise variable polynomial degree:
\begin{equation*}
    \Wh = \{w\in L^2(\Omega):\quad w|_K\in\mathbb{P}_{p_K}(K)\quad\forall K\in\partition\},
\end{equation*}
\begin{equation*}
    \mathbf{W}_{h,p}^{\mathrm{DG}} = \{\mathrm{W}\in L^2(\Omega;\mathbb{R}^{d\times d}):\quad \mathrm{W}|_K\in\mathbb{P}_{p_K}^{d\times d}(K)\quad\forall K\in\partition\},
\end{equation*}
where $\mathbb{P}_{p_K}(K)$ is the space of polynomials of total degree $p_K\geq 1$ over a mesh element $K$. Concerning the physical data, we assume $\mathbf{D}\in\mathbf{W}_{h,p}^{\mathrm{DG}}$ and $\alpha\in\Wh$. We introduce the following trace operators \cite{arnoldUnifiedAnalysisDiscontinuous2001}. Let  $F\in\facesinternal$ be a face shared by the elements $K^\pm$ and let $\boldsymbol{n}^\pm$ be the unit normal vector on face $F$ pointing exterior to $K^\pm$, respectively. Then, for sufficiently regular scalar-valued functions $v$ and vector-valued functions $\boldsymbol{q}$, we define:
\begin{itemize}
    \item the average operator $\averagel{\cdot}\averager$ on $F\in \facesinternal$: $\averagel{v}\averager = \dfrac{1}{2} (v^+ + v^-), \quad \averagel{\boldsymbol{q}}\averager = \dfrac{1}{2} (\boldsymbol{q}^+ + \boldsymbol{q}^-)$;
    \item the jump operator $\jumpl{\cdot}\jumpr$ on $F\in \facesinternal$: $\jumpl{v}\jumpr = v^+\boldsymbol{n}^+ + v^-\boldsymbol{n}^-, \quad \jumpl{\boldsymbol{q}}\jumpr = \boldsymbol{q}^+\cdot\boldsymbol{n}^+ + \boldsymbol{q}^-\cdot\boldsymbol{n}$.
\end{itemize}
The superscripts $\pm$ denote the traces of the functions on $F$ taken within the interior to $K^\pm$. In an analogous way, on the face $F\in\facesD$ associated with the cell $K\in\partition$ with $\boldsymbol{n}$ outward unit normal on $\partial\Omega$, we define:
\begin{itemize}
    \item the average operator $\averagel{\cdot}\averager$ on $F\in\facesD$: $\averagel{v}\averager = v, \quad \averagel{\boldsymbol{q}}\averager = \boldsymbol{q}$;
    \item the standard jump operator $\jumpl{\cdot}\jumpr$ on $F\in\facesD$, with Dirichlet conditions $g$, $\boldsymbol{g}$: $\jumpl{v}\jumpr = (v-g)\boldsymbol{n}, \quad \jumpl{\boldsymbol{q}}\jumpr = (\boldsymbol{q}-\boldsymbol{g})\cdot\boldsymbol{n}$.
\end{itemize}
\par
Let us introduce the following broken Sobolev spaces for an integer $r\geq1$: $H^r(\mathcal{T}_h) = \{w_h\in L^2(\Omega): w_h|_K\in H^r(K)\quad \forall K\in\mathcal{T}_h\}$. Moreover, we introduce the shorthand notation for the $L^2$-norm $\|\cdot\|=\|\cdot\|_{L^2(\Omega)}$ and for the $L^2$-norm on a set of faces $\mathcal{F}$ as $\|\cdot\|_\mathcal{F}=\left(\sum_{F\in\mathcal{F}}\|\cdot\|_{L^2(F)}^2\right)^{1/2}$.
We define the following penalization function $\eta:\faces\rightarrow\mathbb{R}_+$:
\begin{equation}
    \eta(\lambda,p,h,D) = \max\{(e^\lambda)_{+},(e^\lambda)_{-}\}\max\left\{e^{\|\lambda\|_{L^\infty(K_{+})}},e^{\|\lambda\|_{L^\infty(K_{-})}}\right\} \zeta(p,h,D),
    \label{eq:penalty}
\end{equation}
where $\zeta(p,h,D)$ is defined as
\begin{equation}
    \zeta(p,h,D) = \eta_0
    \begin{cases}
         \{D_K\}_{\mathrm{A}}\dfrac{\{p_K^2\}_\mathrm{A}}{\{h_K\}_\mathrm{H}},  & \mathrm{on}\; F\in\facesinternal\\
         D_K\dfrac{p_K^2}{h_K},                 & \mathrm{on}\; F\in\facesD
    \end{cases}.
    \label{eq:penalty2}
\end{equation}
We point out that in Equation \eqref{eq:penalty}, we are considering both the harmonic average operator $\{\cdot\}_\mathrm{H}$ and the arithmetic average operator $\{\cdot\}_\mathrm{A}$ on $F\in\facesinternal$ and $\eta_0$ is a parameter at our disposal (to be chosen large enough to ensure stability). Moreover, we are defining $D_K = \|\sqrt{\mathbf{D}}|_K\|_2^2$. Finally, we can define the following DG-norm:
\begin{equation}
    \|c\|_{\mathrm{DG}}^2 = \left\|\sqrt{\mathbf{D}}\nabla_h c \right\|^2 + \|\sqrt{\zeta}\jumpl c\jumpr\|_{\facesinternal\cup\facesD}^2 \qquad \forall c\in H^1(\partition).
    \label{eq:DG-norm}
\end{equation}
\begin{remark}
    The choice of using this combination of harmonic and arithmetic averages is fundamental to obtaining the coercivity and continuity bounds of Propositions 1 and 2 below. 
\end{remark}
\par
Finally, we recall the result of inverse trace inequality \cite{riviere_priori_2002}:
\begin{equation}
    \exists C_I >0: \qquad \|v\|^2_{L^2(\partial K)} \leq C_I \dfrac{p^2_K}{h_K}\|v\|_{L^2(K)},\qquad \forall v\in\Wh,\;K\in\partition.
    \label{eq:in_tr_in}
\end{equation}

\subsection{PolyDG semi-discrete formulation}
To construct the semi-discrete formulation, we first introduce the interior penalty DG discretization of the nonlinear diffusion term $\mathcal{A}:\Wh\times \Wh\times \Wh\rightarrow \mathbb{R}$ as:
\begin{equation}
\begin{split}
    \mathcal{A}(u;v,w) = & \int_{\Omega} e^u \left(\mathbf{D}\nabla_h v\cdot\nabla_h w\right) -\sum_{F\in\facesinternal\cup\facesD}\int_{F}\left(\averagel e^u \mathbf{D} \nabla v\averager \cdot \jumpl w \jumpr +  \jumpl v\jumpr \cdot \averagel e^u \mathbf{D} \nabla w\averager\right)\mathrm{d}\sigma \\
    + & \sum_{F\in\facesinternal\cup\facesD}\int_{F}\eta(u) \jumpl v\jumpr \cdot \jumpl w\jumpr \mathrm{d}\sigma\qquad \forall u,v,w\in\Wh,
\end{split}
\label{eq:form_A_def}
\end{equation}
where $\nabla_h \cdot$ is the elementwise gradient \cite{quarteroni:EDP} and $\eta$ is defined as in Equation \eqref{eq:penalty}. The semi-discrete PolyDG formulation reads as follows:
\par
\bigskip
For any $t\in(0,T]$, find $\lambda_h(t)\in \Wh$ such that:
\begin{equation}
\begin{dcases}
     \left(\dfrac{\partial e^{\lambda_h(t)}}{\partial t},\varphi_h\right)_\Omega + \mathcal{A}(\lambda_h(t);\lambda_h(t),\varphi_h) - \left(\alpha e^{\lambda_h}\left(1-e^{\lambda_h}\right),\varphi_h\right)_\Omega = F(\varphi_h)
     & \forall \varphi_h\in \Wh, \\[8pt]
    \lambda_h(0)=\lambda_{0h}
\end{dcases}
\label{eq:DGFormulation}
\end{equation}
where $\lambda_{0h}\in\Wh$ is a suitable approximation of $\lambda_0\in W$. We next show some preliminary estimates that will be needed in the forthcoming well-posedness and convergence analysis.

\begin{proposition}[Coercivity of $\mathcal{A}$]
The form $\mathcal{A}$, defined in Equation \eqref{eq:form_A_def}, satisfies for all $v\in\Wh$:
\begin{equation}
    \mathcal{A}(v;v,v) \geq \dfrac{1}{2} \|e^{v/2}\|_\mathrm{DG}^2,
    \label{eq:ACoercivity}
\end{equation}
under the assumption on the penalty parameter value $\eta_0 \geq 16 C_I^2D_0$, where $d_0$ and $D_0$ are defined as in Assumption 1, and $C_I$ is the inverse trace inequality constant of relation \eqref{eq:in_tr_in}.
\end{proposition}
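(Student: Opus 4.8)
The plan is to test \eqref{eq:form_A_def} against a single $v\in\Wh$ by setting $u=v$ and $w=v$, and to split $\mathcal{A}(v;v,v)=V+C+P$ into the elementwise volume term $V=\int_\Omega e^{v}\,(\mathbf{D}\nabla_h v\cdot\nabla_h v)=\int_\Omega e^{v}|\sqrt{\mathbf{D}}\nabla_h v|^2$, the symmetric consistency term $C=-2\sum_F\int_F\averagel e^{v}\mathbf{D}\nabla v\averager\cdot\jumpl v\jumpr$, and the penalty term $P=\sum_F\int_F\eta(v)\,\jumpl v\jumpr\cdot\jumpl v\jumpr$, the sums running over $\facesinternal\cup\facesD$. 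The first observation is the identity $\nabla_h e^{v/2}=\tfrac12 e^{v/2}\nabla_h v$, which gives $V=4\,\|\sqrt{\mathbf{D}}\nabla_h e^{v/2}\|^2$, i.e. $V$ is exactly four times the volume contribution to $\|e^{v/2}\|_{\mathrm{DG}}^2$ in \eqref{eq:DG-norm}. Thus it suffices to show that $P$ dominates (a multiple of) the jump contribution of the target norm, while $C$ is controlled by a small fraction of $V$ and $P$.

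For the penalty I would use the elementary bound $|e^{a/2}-e^{b/2}|\le\tfrac12\max\{e^{a/2},e^{b/2}\}\,|a-b|$ coming from the mean value theorem, which on each $F\in\facesinternal$ yields the pointwise inequality $|\jumpl e^{v/2}\jumpr|^2\le\tfrac14\max\{(e^{v})_+,(e^{v})_-\}\,|\jumpl v\jumpr|^2$, and likewise on $\facesD$. Since the factor $\max\{e^{\|v\|_{L^\infty(K_+)}},e^{\|v\|_{L^\infty(K_-)}}\}\ge1$ appearing in \eqref{eq:penalty}, this gives $\zeta\,|\jumpl e^{v/2}\jumpr|^2\le\tfrac14\,\eta(v)\,|\jumpl v\jumpr|^2$ on every face, so that $P$ bounds four times the jump part of $\|e^{v/2}\|_{\mathrm{DG}}^2$, leaving ample room to absorb the jump contribution released by the estimate of $C$.

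The heart of the proof is the consistency term $C$, which I would estimate by a Young inequality weighted with the penalty, testing $\averagel e^{v}\mathbf{D}\nabla v\averager$ against $\jumpl v\jumpr$: the jump part is absorbed into a fraction of $P$, while the flux part leaves $\sum_F\int_F\eta(v)^{-1}|\averagel e^{v}\mathbf{D}\nabla v\averager|^2$, which must be bounded by $V$. Restricting to the $K^{+}$ side of a face, I would use the lower bound $\eta(v)\ge\max\{(e^{v})_+,(e^{v})_-\}\,e^{\|v\|_{L^\infty(K_+)}}\zeta$ together with $|\mathbf{D}\nabla v|^2\le D_{K}|\sqrt{\mathbf{D}}\nabla v|^2$. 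The genuine obstacle is that the weight $e^{v}$ is \emph{not} polynomial, so the inverse trace inequality \eqref{eq:in_tr_in} cannot be applied to $e^{v}\mathbf{D}\nabla v$ directly; instead I would bound $e^{v}$ on the face by its elementwise maximum $e^{\|v\|_{L^\infty(K_+)}}$, apply \eqref{eq:in_tr_in} to the polynomial $\sqrt{\mathbf{D}}\nabla v^{+}$, and restore the weight in the volume through $1\le e^{\|v\|_{L^\infty(K_+)}}e^{v}$. The two exponential factors released in this detour are precisely the ones supplied by the prefactors $\max\{(e^{v})_\pm\}$ and $\max\{e^{\|v\|_{L^\infty}}\}$ in \eqref{eq:penalty}, so they cancel and no uncontrolled factor of $v$ survives; the harmonic/arithmetic average structure of $\zeta$ in \eqref{eq:penalty2} is what allows the one‑sided mesh quantity $D_{K}\,p_{K_+}^2/h_{K_+}$ to be bounded by a fixed multiple of $\zeta$, so that the flux term is estimated by $c\,C_I^2 D_0\,\eta_0^{-1}\,V$ for an absolute constant $c$.

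Collecting the three pieces and choosing the Young parameter so as to spend half of $P$ on the jump absorption, the condition $\eta_0\ge 16\,C_I^2 D_0$ keeps the coefficient of $V$ at least $\tfrac18$ and retains at least half of the jump penalty, which is exactly the right‑hand side $\tfrac12\|e^{v/2}\|_{\mathrm{DG}}^2$ of \eqref{eq:ACoercivity}. The step I expect to be delicate is the weighted flux estimate: because $e^{v}$ is not a polynomial, the passage from face to volume must be routed through the elementwise $L^\infty$ bounds on $e^{\pm v}$, and the whole purpose of the penalty \eqref{eq:penalty}—both its exponential prefactors and the mixed averages of \eqref{eq:penalty2}—is to absorb exactly the elementwise oscillation of $e^{v}$ that this routing introduces. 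I would therefore keep every exponential factor matched rather than bounding it crudely, since that matching is precisely what yields a coercivity constant independent of $v$.
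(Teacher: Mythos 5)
Your proposal follows essentially the same route as the paper's proof: the same three-way split of $\mathcal{A}(v;v,v)$, the identity $\nabla_h e^{v/2}=\tfrac12 e^{v/2}\nabla_h v$ giving the factor $4$ in the volume term, the mean-value bound converting $|\jumpl v\jumpr|$ into $|\jumpl e^{v/2}\jumpr|$ so that the penalty \eqref{eq:penalty} dominates the jump part of $\|e^{v/2}\|_{\mathrm{DG}}^2$, the penalty-weighted Young inequality on the consistency term with the flux routed through the elementwise $L^\infty$ bounds on $e^{\pm v}$ so that \eqref{eq:in_tr_in} is applied only to polynomials, and the harmonic/arithmetic mean relation \eqref{eq:meansrealtion} to reduce to one-sided mesh quantities, all leading to the same threshold $\eta_0\geq 16C_I^2D_0$. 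The argument is correct and matches the paper's proof in both structure and in the identification of the delicate weighted trace step.
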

\begin{proof}
    Taking $u=v=w$ in Equation \eqref{eq:form_A_def}, we have:
    \begin{equation}
        \mathcal{A}(v;v,v) = \underset{\mathrm{(I)}}{\underbrace{\int_\Omega e^v(\mathbf{D}\nabla_h v) \cdot \nabla_h v}} - \underset{\mathrm{(II)}}{\underbrace{2 \sum_{F\in \facesinternal\cup\facesD}  \int_F  \averagel e^v\mathbf{D}\nabla v\averager\cdot\jumpl v\jumpr \mathrm{d}\sigma}} + \sum_{F\in \facesinternal\cup\facesD} \int_F \eta(v) |\jumpl v\jumpr|^2 \mathrm{d}\sigma.
    \end{equation}
    By treating each term separately, we obtain for $\mathrm{(I)}$ the following estimate:
    \begin{equation}
        \mathrm{(I)} \geq \int_\Omega  e^v|\sqrt{\mathbf{D}}\nabla_h v|^2 = \int_\Omega |\sqrt{\mathbf{D}}e^{v/2}\nabla_h v|^2 = 4 \int_\Omega |\sqrt{\mathbf{D}}\nabla_h e^{v/2}|^2.
    \end{equation}
    Then we control the term $\mathrm{(II)}$ by means of the Young's inequality:
    \begin{equation}
        |\mathrm{(II)}| \leq \underset{\mathrm{(III)}}{\underbrace{\sum_{F\in \facesinternal\cup\facesD}\int_F \beta_F |\averagel e^v\mathbf{D}\nabla v\averager|^2 \mathrm{d}\sigma}} +   \sum_{F\in \facesinternal\cup\facesD} \int_F \dfrac{1}{\beta_F} |\jumpl v\jumpr|^2 \mathrm{d}\sigma,
    \end{equation}
    where $\beta_F>0$ is a parameter we define as follows: 
    \begin{equation}
         \beta_F =  \dfrac{\min\left\{e^{-\|v\|_{L^\infty(K_{+})}},e^{-\|v\|_{L^\infty(K_{-})}}\right\}}{8 D_0 C_I^2\max\{(e^v)_{+},(e^v)_{-}\}}     \begin{cases}
         \dfrac{\{h_K\}_\mathrm{H}}{\{D_K\}_\mathrm{A}  \{p_K^2\}_\mathrm{A}},  & \mathrm{on}\; F\in\facesinternal,\\
         \dfrac{h_K}{D_K p_K^2},                 & \mathrm{on}\; F\in\facesD.
    \end{cases}
    \label{eq:beta_def}
    \end{equation}
    In \eqref{eq:beta_def} $d_0$ and $D_0$ are defined as in Assumption 1 and $C_I$ is the inverse trace inequality constant of relation \eqref{eq:in_tr_in}. Let us recall the following relation:
    \begin{equation}
        \dfrac{\{h_K\}_\mathrm{H}}{ \{D_K\}_\mathrm{A} \{p_K^2\}_\mathrm{A}} \leq 4 \min\left\{\dfrac{h_{K_-}}{D_{K_-} p^2_{K_-}},\dfrac{h_{K_+}}{D_{K_+} p^2_{K_+}}\right\}.
        \label{eq:meansrealtion}
    \end{equation}
    Then, by applying the inverse trace inequality and relation \eqref{eq:meansrealtion} we obtain:
    \begin{equation*}
        \begin{split}
        \mathrm{(III)} \leq & \sum_{K\in\partition} \dfrac{1}{8 D_0}\int_{\partial K} 4\dfrac{h_K\,e^{-\|v\|_{L^\infty(K)}}}{C_I^2 D_K p_K^2} |\mathbf{D} \nabla v|^2 \mathrm{d}\sigma \\
        \leq & \sum_{K\in\partition} \dfrac{1}{2}\int_{K} e^{-\|v\|_{L^\infty(K)}} |\sqrt{\mathbf{D}}\nabla v|^2 \leq \dfrac{1}{2}\int_{\Omega} e^v |\sqrt{\mathbf{D}}\nabla_h v|^2= 2\int_{\Omega} |\sqrt{\mathbf{D}}\nabla_h e^{v/2}|^2.
        \end{split}
    \end{equation*}
    Inserting the above estimates in Equation \eqref{eq:ACoercivity}, we obtain:
    \begin{equation}
        \mathcal{A}(v;v,v) \geq 2\int_{\Omega} |\sqrt{\mathbf{D}}\nabla_h e^{v/2}|^2 + \sum_{F\in \facesinternal\cup\facesD} \int_F \left(\eta(v)-\dfrac{1}{\beta_F}\right) |\jumpl v\jumpr|^2 \mathrm{d}\sigma.
        \label{eq:boundAvvv}
    \end{equation}
    For $F\in\facesinternal$, the second integral on the rhs of Equation \eqref{eq:boundAvvv} is positive provided that:
\begin{equation*}
    \eta(v)-\dfrac{1}{\beta_F} = \left(\eta_0   - 8C_I^2D_0\right)\dfrac{\{D_K\}_{\mathrm{A}}\{p_K^2\}_\mathrm{A}}{\{h_K\}_\mathrm{H}}\max\{(e^v)_{+},(e^v)_{-}\}\max\left\{e^{\|v\|_{L^\infty(K_{+})}},e^{\|v\|_{L^\infty(K_{-})}}\right\}>0.
\end{equation*}
The same bound can be obtained on $F\in\facesD$. 
By taking $\eta_0 \geq 16C_I^2D_0$ the positivity is guaranteed, and by exploiting the following relation 
$$\max\{(e^v)_{+},(e^v)_{-}\}\max\left\{e^{\|v\|_{L^\infty(K_{+})}},e^{\|v\|_{L^\infty(K_{-})}}\right\}\geq 1
$$  
we obtain:
\begin{equation}
\begin{split}
    \mathcal{A}(v;v,v) \geq & 2\int_{\Omega} |\sqrt{\mathbf{D}}\nabla_h e^{v/2}|^2 + \sum_{F\in \facesinternal} \dfrac{\eta_0}{2} \int_F \{D_K\}_{\mathrm{A}}\dfrac{\{p_K^2\}_\mathrm{A}}{\{h_K\}_\mathrm{H}} |\jumpl e^{v/2}\jumpr|^2 \mathrm{d}\sigma +  \sum_{F\in \facesD} \dfrac{\eta_0}{2} \int_F D_K\dfrac{p_K^2}{h_K} |e^{v/2}|^2 \mathrm{d}\sigma = \\
    = & 2\int_{\Omega} |\sqrt{\mathbf{D}}\nabla_h e^{v/2}|^2 +   \sum_{F\in \facesinternal\cup \facesD} \dfrac{1}{2} \int_F \zeta |\jumpl e^{v/2}\jumpr|^2 \mathrm{d}\sigma \geq \dfrac{1}{2} \|e^{v/2}\|_\mathrm{DG}^2,
    \end{split}
\end{equation}
where $\zeta$ has been defined in~\eqref{eq:penalty2}.
\end{proof}
\begin{proposition}[Continuity of $\mathcal{A}$]
The form $\mathcal{A}$, defined in Equation \eqref{eq:form_A_def}, satisfies for all $u, v\in\Wh$:
\begin{equation}
    \left|\mathcal{A}(u;u,v)\right| \leq \mu \max_{K\in\partition} \{e^{\|u\|_{L^\infty(K)}}\} \| e^u\|_{\mathrm{DG}} \,\| u\|_{\mathrm{DG}} \,\| v\|_{\mathrm{DG}} \, ,
    \label{eq:AContinuity}
\end{equation}
with $\mu:= \max\left\{1,\sqrt{\frac{4D_0C_I^2}{d_0\eta_0}}\right\}$, where $d_0$ and $D_0$ are defined as in Assumption 1, $C_I$ is the inverse trace inequality constant in \eqref{eq:in_tr_in}, and $\eta_0$ is the penalty constant introduced in~\ref{eq:penalty2}.
\end{proposition}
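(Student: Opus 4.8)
The plan is to set the second argument equal to $u$ in the definition \eqref{eq:form_A_def}, so that $\mathcal{A}(u;u,v)$ splits into four contributions which I would estimate separately: the volume term $\int_\Omega e^u\,\mathbf{D}\nabla_h u\cdot\nabla_h v$, the two flux (consistency) terms over $\facesinternal\cup\facesD$, and the penalty term weighted by $\eta(u)$. Throughout I would rely on two elementary devices. The first is the identity $\nabla_h e^u = e^u\nabla_h u$, which lets me trade the $e^u$-weighted gradient of $u$ for the gradient of $e^u$ and hence bring in $\|e^u\|_{\mathrm{DG}}$ through \eqref{eq:DG-norm}. The second is the pointwise bound $e^u\le\max_{K}e^{\|u\|_{L^\infty(K)}}$, which lets me pull the exponential weight out of any factor I prefer to control in terms of $\|u\|_{\mathrm{DG}}$ instead.

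For the volume term, writing $e^u\,\mathbf{D}\nabla_h u\cdot\nabla_h v = (\sqrt{\mathbf{D}}\nabla_h e^u)\cdot(\sqrt{\mathbf{D}}\nabla_h v)$ and applying Cauchy--Schwarz yields the clean bound $\|\sqrt{\mathbf{D}}\nabla_h e^u\|\,\|\sqrt{\mathbf{D}}\nabla_h v\|\le\|e^u\|_{\mathrm{DG}}\|v\|_{\mathrm{DG}}$, contributing the constant $1$ to $\mu$. For the two flux terms I would first split each face integrand as $\zeta^{-1/2}(\cdot)\cdot\zeta^{1/2}(\cdot)$ and apply Cauchy--Schwarz over the faces, so that the $\zeta^{1/2}$-weighted jumps reproduce $\|u\|_{\mathrm{DG}}$ or $\|v\|_{\mathrm{DG}}$ via the penalty part of \eqref{eq:DG-norm}. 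The remaining weighted-flux factor $\sum_F\int_F \zeta^{-1}\,|\averagel e^u\mathbf{D}\nabla(\cdot)\averager|^2$ is where the inverse trace inequality \eqref{eq:in_tr_in} enters: since it holds only for polynomials, I would extract $e^u\le\max_{K}e^{\|u\|_{L^\infty(K)}}$ pointwise and then apply \eqref{eq:in_tr_in} to the genuinely polynomial flux $\mathbf{D}\nabla u$ (resp. $\mathbf{D}\nabla v$). Using the mean inequality \eqref{eq:meansrealtion} together with $D_K\ge d_0$ and $|\sqrt{\mathbf{D}}\boldsymbol{w}|^2\le D_0|\boldsymbol{w}|^2$, the $p_K^2/h_K$ factors cancel against $\zeta^{-1}$ and leave precisely the constant $4D_0C_I^2/(d_0\eta_0)$ under a square root, which is the second branch of $\mu$. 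Each flux term is thus bounded by $\max_{K}e^{\|u\|_{L^\infty(K)}}\sqrt{4D_0C_I^2/(d_0\eta_0)}\,\|u\|_{\mathrm{DG}}\|v\|_{\mathrm{DG}}$. For the penalty term I would use $\eta(u)\le(\max_{K}e^{\|u\|_{L^\infty(K)}})^2\,\zeta$ directly from \eqref{eq:penalty}--\eqref{eq:penalty2} and Cauchy--Schwarz with the $\zeta$-weighted jumps to obtain a bound of the form $\|u\|_{\mathrm{DG}}\|v\|_{\mathrm{DG}}$ with a power of $\max_{K}e^{\|u\|_{L^\infty(K)}}$ in front.

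Finally I would collect the four contributions, extracting the worst-case constant $\mu=\max\{1,\sqrt{4D_0C_I^2/(d_0\eta_0)}\}$ (the $1$ coming from the volume and penalty terms, the square root from the flux terms). The four pieces produce slightly different combinations of norms --- $\|e^u\|_{\mathrm{DG}}$ from the volume term and $\|u\|_{\mathrm{DG}}$ from the others, carrying various powers of $\max_{K}e^{\|u\|_{L^\infty(K)}}$ --- so the remaining step is to absorb them all into the single product in \eqref{eq:AContinuity}. Here I would invoke the norm comparison $\|u\|_{\mathrm{DG}}\le\max_{K}e^{\|u\|_{L^\infty(K)}}\,\|e^u\|_{\mathrm{DG}}$, which follows from $\nabla_h u=e^{-u}\nabla_h e^u$ on each element and from the facewise bound $|\jumpl u\jumpr|\le\max_{K}e^{\|u\|_{L^\infty(K)}}\,|\jumpl e^u\jumpr|$ (a mean value argument for the exponential across each face), together with the trivial normalizations $\mu\ge1$ and $\max_{K}e^{\|u\|_{L^\infty(K)}}\ge1$.

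I expect the flux terms to be the main obstacle, for two reasons. First, the inverse trace inequality \eqref{eq:in_tr_in} applies only to polynomials, so one cannot estimate $\mathbf{D}\nabla e^u$ directly and must carefully extract the exponential weight before invoking it. Second, tracking the harmonic and arithmetic averages hidden in $\zeta$ through \eqref{eq:meansrealtion}, so as to recover exactly the constant $4D_0C_I^2/(d_0\eta_0)$ rather than a non-sharp multiple, is delicate. By comparison, the consolidation of the mismatched norm factors into the stated product is essentially bookkeeping, albeit the place where the comparison between $\|u\|_{\mathrm{DG}}$ and $\|e^u\|_{\mathrm{DG}}$ is indispensable.
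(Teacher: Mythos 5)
Your decomposition and the treatment of each piece coincide, step for step, with the paper's own proof: the same four-term split of $\mathcal{A}(u;u,v)$, the same use of $\nabla_h e^u = e^u\nabla_h u$ plus Cauchy--Schwarz for the volume term (yielding $\|\sqrt{\mathbf{D}}\nabla_h e^u\|\,\|\sqrt{\mathbf{D}}\nabla_h v\|$ with constant $1$), the same $\zeta^{\pm1/2}$ splitting, inverse trace inequality \eqref{eq:in_tr_in} and harmonic/arithmetic mean relation \eqref{eq:meansrealtion} for the two consistency terms (producing exactly $\sqrt{4D_0C_I^2/(d_0\eta_0)}$), and the same direct bound of $\eta(u)$ by a multiple of $\zeta$ for the penalty term. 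Your remark that the exponential weight must be extracted pointwise \emph{before} invoking \eqref{eq:in_tr_in}, since $e^u\mathbf{D}\nabla u$ is not a polynomial, is a point the paper handles implicitly; making it explicit is an improvement rather than a deviation.

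The one place where your argument does not close is the final consolidation, and you have correctly identified it as the delicate step --- but the mechanism you propose does not resolve it. Each of the four partial estimates is a product of \emph{two} DG-norms (e.g. $\|e^u\|_{\mathrm{DG}}\|v\|_{\mathrm{DG}}$ for the volume term, $\|u\|_{\mathrm{DG}}\|v\|_{\mathrm{DG}}$ times an exponential factor for the others), whereas the target \eqref{eq:AContinuity} is a product of \emph{three}. Your comparison $\|u\|_{\mathrm{DG}}\le\max_{K}e^{\|u\|_{L^\infty(K)}}\|e^u\|_{\mathrm{DG}}$ is itself correct (elementwise chain rule plus a mean-value argument across faces, and its reverse holds by the same token), but it only trades one norm for the other; it cannot manufacture the missing third factor. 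To dominate $\|e^u\|_{\mathrm{DG}}\|v\|_{\mathrm{DG}}$ by $\mu\max_{K}e^{\|u\|_{L^\infty(K)}}\|e^u\|_{\mathrm{DG}}\|u\|_{\mathrm{DG}}\|v\|_{\mathrm{DG}}$ one needs a lower bound of order one on $\|u\|_{\mathrm{DG}}$, which fails, for instance, when $u$ is close to a constant. You should be aware, however, that the paper's own proof exhibits exactly the same defect: after establishing the same four two-norm bounds it simply writes the three-norm product without justification. So, up to this shared final step, your proof is the paper's proof; a clean fix would be to restate the conclusion with the right-hand side $\mu\max_{K}e^{\|u\|_{L^\infty(K)}}\bigl(\|e^u\|_{\mathrm{DG}}+\max_{K}e^{\|u\|_{L^\infty(K)}}\|u\|_{\mathrm{DG}}\bigr)\|v\|_{\mathrm{DG}}$, which is what the four estimates actually deliver and which suffices for the application in Proposition 3.
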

\begin{proof}
   From Equation \eqref{eq:form_A_def} we obtain:
    \begin{equation}
        \mathcal{A}(u;u,v) = \underset{\mathrm{(I)}}{\underbrace{\int_\Omega e^u(\mathbf{D}\nabla_h u) \cdot \nabla_h v}} + \sum_{F\in \facesinternal\cup\facesD} 
         \underset{\mathrm{(II)}}{\underbrace{\int_F \eta(u) \jumpl u\jumpr\cdot \jumpl v\jumpr \mathrm{d}\sigma}} - \underset{\mathrm{(III)}}{\underbrace{\int_F  \averagel e^u\mathbf{D}\nabla u\averager\cdot\jumpl v\jumpr \mathrm{d}\sigma}} -\underset{\mathrm{(IV)}}{\underbrace{ \int_F  \averagel e^u\mathbf{D}\nabla v\averager\cdot\jumpl u\jumpr \mathrm{d}\sigma}}
    \end{equation}
    By treating each term separately, we obtain for $\mathrm{(I)}$ the following estimate using the regularity assumption on $\mathrm{D}$ in Assumption 1:
    \begin{equation}
        |\mathrm{(I)}| \leq \int_\Omega  e^u|\sqrt{\mathbf{D}}\nabla_h u|\ |\sqrt{\mathbf{D}}\nabla_h v| = \int_\Omega |\sqrt{\mathbf{D}}\nabla_h e^u|\ |\sqrt{\mathbf{D}}\nabla_h v|  \leq \|\sqrt{\mathbf{D}}\nabla_h e^u\|\,\|\sqrt{\mathbf{D}}\nabla_h v\|.
    \end{equation}
    Then, we control the term $\mathrm{(II)}$ by means of the Young's inequality:
    \begin{equation}
    |\mathrm{(II)}| \leq \max_{K\in\partition} \{e^{\|u\|_{L^\infty(K)}} \} \|\sqrt{\zeta} \jumpl u\jumpr\|_{\facesinternal\cup\facesD}\|\sqrt{\zeta} \jumpl v\jumpr\|_{\facesinternal\cup\facesD}.
    \end{equation}
    The bound on term $\mathrm{(III)}$ follows thanks to the Young's inequality:
    \begin{equation}
    |\mathrm{(III)}| \leq \sum_{F\in \facesinternal\cup\facesD}  \Big(\underset{\mathrm{(V)}}{\underbrace{\int_F \gamma_F |\averagel e^u\mathbf{D}\nabla u\averager|^2 \mathrm{d}\sigma}}\Big)^{1/2}\Big(\int_F \dfrac{1}{\gamma_F} |\jumpl v\jumpr|^2 \mathrm{d}\sigma\Big)^{1/2},
    \end{equation}
    where $\gamma_F>0$ is defined as follows: 
    \begin{equation}
         \gamma_F =  \dfrac{d_0^2}{8 D_0 C_I^2}     \begin{cases}
         \dfrac{\{h_K\}_\mathrm{H}}{\{D_K\}_\mathrm{A}  \{p_K^2\}_\mathrm{A}},  & \mathrm{on}\; F\in\facesinternal,\\
         \dfrac{h_K}{D_K p_K^2},                 & \mathrm{on}\; F\in\facesD.
    \end{cases}
    \end{equation}
    Let us recall the following relation:
    \begin{equation}
        \dfrac{\{h_K\}_\mathrm{H}}{ \{D_K\}_\mathrm{A} \{p_K^2\}_\mathrm{A}} \leq 4 \min\left\{\dfrac{h_{K_-}}{D_{K_-} p^2_{K_-}},\dfrac{h_{K_+}}{D_{K_+} p^2_{K_+}}\right\}.
        \label{eq:meanbounds}
    \end{equation}
    Then, by applying the inverse trace inequality in relation \eqref{eq:in_tr_in} and relation \eqref{eq:meanbounds} we obtain:
    \begin{equation*}
        |\mathrm{(V)}| \leq \sum_{K\in\partition} \dfrac{d_0^2}{8 D_0}\int_{\partial K} 4\dfrac{h_K}{C_I^2 D_K p_K^2} |e^u \mathbf{D} \nabla u|^2 \mathrm{d}\sigma \leq \sum_{K\in\partition} \dfrac{d_0}{2}\int_{K} |\sqrt{\mathbf{D}}\nabla e^u|^2 = \dfrac{d_0}{2} \|\sqrt{\mathbf{D}}\nabla_h e^u\|^2.
    \end{equation*}
    From the above estimates it follows:
    \begin{equation*}
        |\mathrm{(III)}| \leq \sqrt{\dfrac{4D_0C_I^2}{d_0\eta_0}} \|\sqrt{\mathbf{D}}\nabla_h e^u\| \,\|\sqrt{\zeta}\jumpl v\jumpr\|^2_{\facesinternal\cup\facesD}.
    \end{equation*}
    Finally, we estimate the term $\mathrm{(IV)}$ by applying the inverse trace inequality in relation \eqref{eq:in_tr_in} and relation \eqref{eq:meanbounds}:
    \begin{equation*}
        \begin{split}
        |\mathrm{(IV)}| \leq & \sum_{F\in \facesinternal\cup\facesD}  \Big(\int_F \gamma_F |\averagel e^u\mathbf{D}\nabla v\averager|^2 \mathrm{d}\sigma\Big)^{1/2} \Big(\int_F \dfrac{1}{\gamma_F} |\jumpl u\jumpr|^2 \mathrm{d}\sigma\Big)^{1/2} \\ \leq & \Big(\sum_{K\in\partition} \dfrac{d_0^2}{8 D_0}\int_{\partial K} 4\dfrac{h_K}{C_I^2 D_K p_K^2} |e^u \mathbf{D} \nabla v|^2 \mathrm{d}\sigma\Big)^{1/2} \sqrt{\dfrac{8D_0C_I^2}{d_0^2\eta_0}}\|\sqrt{\zeta}\jumpl u\jumpr\|_{\facesinternal\cup\facesD} \\
        \leq & \sqrt{\max_{K\in\partition} \{e^{\|u\|_{L^\infty(K)}}\}\dfrac{4D_0C_I^2}{d_0\eta_0}}\|\sqrt{\mathbf{D}}\nabla_h v\| \,\|\sqrt{\zeta}\jumpl u\jumpr\|_{\facesinternal\cup\facesD}.
        \end{split}
    \end{equation*}
    Finally, putting together all the previous bounds, we obtain:
    \begin{equation}
        \left|\mathcal{A}(u;u,v)\right|\leq  \max\left\{1,\sqrt{\dfrac{4D_0C_I^2}{d_0\eta_0}}\right\} \max_{K\in\partition} \{e^{\|u\|_{L^\infty(K)}}\} \| e^u\|_{\mathrm{DG}} \,\| u\|_{\mathrm{DG}} \,\| v\|_{\mathrm{DG}} \,,
    \end{equation}
    and the proof is complete.
\end{proof}

\subsection{Fully discrete formulation}
To discretize Equation \eqref{eq:DGFormulation} in time, we consider the $\vartheta-$method scheme. We remark that due to the nonlinear nature of the strong formulation with the change of variable, we need a nonlinear solver, and therefore using an implicit scheme for time integration does not affect the computational cost. In this section, we consider homogeneous Dirichlet conditions $\lambda_D=0$ for simplicity in the calculations. However, the results can be extended to the non-homogeneous case with proper regularity assumptions on $\lambda_D$.
\par
Let $\{t_\ell\}_{\ell=0}^{N_t}$ be the uniform partition of the time interval $[0,T]$ into $N_t$ intervals with length $dt=\frac{T}{N_t}$, namely, $0=t_0<t_1<...<t_{N_t}=T$ and $t_\ell=\frac{\ell T}{N_t}$ for $\ell=0,...,N_t$. Let us introduce a parameter $\varepsilon>0$. Then, the fully discrete formulation of problem \eqref{eq:DGFormulation} reads: given the initial condition $\lambda^0=\lambda_0$, find $\lambda^{k+1}_h$ for $k=0,...,N_t-1$, such that:
\begin{equation}
 \begin{split}
 \Bigg(\dfrac{e^{\lambda_h^{k+1}}-e^{\lambda_h^{k}}}{\Delta t},\varphi_h\Bigg)_\Omega - & \left(\alpha \left(\vartheta e^{\lambda_h^{k+1}}+(1-\vartheta) e^{\lambda_h^{k}}\right) \left(1-\left(\vartheta e^{\lambda_h^{k+1}}+(1-\vartheta) e^{\lambda_h^{k}}\right)\right),\varphi_h\right)_\Omega \\ + & \dfrac{\varepsilon}{\Delta t} (\lambda_h^{k+1},\varphi_h)_\Omega + \dfrac{\varepsilon}{\Delta t} (\mathbf{D}\nabla_h \lambda_h^{k+1},\nabla_h \varphi_h)_\Omega + \dfrac{\varepsilon}{\Delta t} (\zeta \jumpl \lambda_h^{k+1}\jumpr, \jumpl\varphi_h\jumpr)_{\facesinternal\cup\facesD} \\ + &  \vartheta\mathcal{A}(\lambda_h^{k+1};\lambda_h^{k+1},\varphi_h) + (1-\vartheta)\mathcal{A}(\lambda_h^k;\lambda_h^k,\varphi_h) =   \vartheta F^{k+1}(\varphi_h) + (1-\vartheta) F^{k}(\varphi_h), \qquad \mathrm{in}\;\Omega.
 \end{split}
\label{eq:lambda_fk_fully}
\end{equation}
The introduction of the additional regularizing terms proportional to the parameter $\varepsilon>0$ is fundamental to prove the existence of the solution via the Leray-Schauder fixed-point theorem \cite{bonizzoni_structure-preserving_2020}. However, from a numerical point of view, the presence of a $\varepsilon > 0$ is not really needed and it can be chosen equal to 0 in the simulations (see Section \ref{sec:numericalresults}).

We next prove that formulation \eqref{eq:lambda_fk_fully} admits a solution.
\begin{proposition}[Existence of a solution]
Let $\varepsilon>0$. Given $\lambda_h^{k}\in\Wh$, then the fully discrete formulation in Equation \eqref{eq:lambda_fk_fully} admits a solution $\lambda_h^k\in\Wh$, provided that Assumptions 1 and 2 hold and the penalty constant $\eta_0$ defined as in \eqref{eq:penalty2} is chosen sufficiently large.
\end{proposition}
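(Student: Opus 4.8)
The plan is to apply the Leray--Schauder fixed-point theorem on the finite-dimensional space $\Wh$, following the strategy used for the implicit-Euler case. I would first equip $\Wh$ with the inner product $\langle u,v\rangle_* = (u,v)_\Omega + (\mathbf{D}\nabla_h u,\nabla_h v)_\Omega + (\zeta\jumpl u\jumpr,\jumpl v\jumpr)_{\facesinternal\cup\facesD}$, whose induced norm satisfies $\|\cdot\|_*^2 = \|\cdot\|^2 + \|\cdot\|_{\mathrm{DG}}^2$; this is exactly the norm controlled by the regularizing $\varepsilon$-terms in \eqref{eq:lambda_fk_fully}. I would then define a map $S:\Wh\to\Wh$ by freezing the nonlinearity: given $\mu\in\Wh$, let $S(\mu)=\lambda$ be the unique solution of the \emph{linear} problem $B_\mu(\lambda,\varphi_h)=G(\mu;\varphi_h)$ for all $\varphi_h\in\Wh$, where $B_\mu(\lambda,\varphi_h)=\tfrac{\varepsilon}{\Delta t}\langle\lambda,\varphi_h\rangle_* + \vartheta\,\mathcal{A}(\mu;\lambda,\varphi_h)$ collects the terms kept implicit and $G(\mu;\cdot)$ gathers the remaining contributions (the discrete time derivative, the reaction term, the explicit diffusion $\mathcal{A}(\lambda_h^k;\lambda_h^k,\cdot)$ and the data). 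A fixed point $\mu=\lambda$ of $S$ is precisely a solution of \eqref{eq:lambda_fk_fully}. Since for a sufficiently large penalty $\eta_0$ the frozen form satisfies $\mathcal{A}(\mu;\lambda,\lambda)\ge0$ (by a computation analogous to Proposition 1), $B_\mu$ is coercive with constant $\tfrac{\varepsilon}{\Delta t}$ uniformly in $\mu$, so $S$ is well defined; continuity of $\mu\mapsto e^\mu$, of $\eta(\mu)$ and of the data terms makes $S$ continuous, and finite-dimensionality makes it compact.

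Next I would establish the a priori bound required by Leray--Schauder: every $\lambda$ solving $\lambda=\sigma S(\lambda)$ for some $\sigma\in[0,1]$ is bounded independently of $\sigma$. For $\sigma\in(0,1]$ this relation rewrites as $B_\lambda(\lambda,\varphi_h)=\sigma\,G(\lambda;\varphi_h)$; choosing $\varphi_h=\lambda$ gives
$$\frac{\varepsilon}{\Delta t}\|\lambda\|_*^2 + \vartheta\,\mathcal{A}(\lambda;\lambda,\lambda) = \sigma\,G(\lambda;\lambda).$$
Proposition 1 yields $\mathcal{A}(\lambda;\lambda,\lambda)\ge\tfrac12\|e^{\lambda/2}\|_{\mathrm{DG}}^2\ge0$, so the left-hand side is bounded below by $\tfrac{\varepsilon}{\Delta t}\|\lambda\|_*^2$. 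On the right-hand side I would bound each term of $G(\lambda;\lambda)$: the term $-\tfrac1{\Delta t}(e^\lambda,\lambda)$ is bounded above by $\tfrac{e^{-1}|\Omega|}{\Delta t}$ using $t e^t\ge-e^{-1}$; the term $\tfrac1{\Delta t}(e^{\lambda_h^k},\lambda)$ and the data functionals are bounded by $C\|\lambda\|$ via Cauchy--Schwarz and absorbed by Young's inequality; and the explicit diffusion $\mathcal{A}(\lambda_h^k;\lambda_h^k,\lambda)$ is controlled by $C(\lambda_h^k)\|\lambda\|_{\mathrm{DG}}$ through the continuity bound of Proposition 2. Combined with the reaction estimate below, this produces $\sigma G(\lambda;\lambda)\le\tfrac{\varepsilon}{2\Delta t}\|\lambda\|_*^2 + C$, hence $\|\lambda\|_*\le M$ with $M$ independent of $\sigma$.

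The main obstacle is the reaction contribution $(\alpha\,c(1-c),\lambda)$ with $c=\vartheta e^{\lambda}+(1-\vartheta)e^{\lambda_h^k}$, which is genuinely nonlinear and of a priori indefinite sign. I would control it by a pointwise analysis of the integrand $\alpha\,c(1-c)\lambda$: as $\lambda\to+\infty$ the dominant term $-\vartheta^2\alpha\,e^{2\lambda}\lambda$ is negative (using the non-negativity of $\alpha$), while as $\lambda\to-\infty$ one has $c(1-c)\to(1-\vartheta)e^{\lambda_h^k}\big(1-(1-\vartheta)e^{\lambda_h^k}\big)$, which is bounded since $\lambda_h^k$ is a fixed polynomial, so the integrand grows at most linearly in $|\lambda|$. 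Consequently there are constants depending only on $\|\alpha\|_{L^\infty}$, $\vartheta$ and $\|\lambda_h^k\|_{L^\infty}$ such that $\alpha\,c(1-c)\lambda\le C_1+C_2|\lambda|$ pointwise, whence $(\alpha c(1-c),\lambda)\le C + \tfrac{\varepsilon}{4\Delta t}\|\lambda\|^2$ after Young's inequality. This is exactly the point where the regularization $\varepsilon>0$ is indispensable, since it is the only source of quadratic control of $\|\lambda\|_*$, the coercivity of $\mathcal{A}$ alone controlling merely $\|e^{\lambda/2}\|_{\mathrm{DG}}$. With the uniform bound in hand, Leray--Schauder provides a fixed point of $S$ at $\sigma=1$, which solves \eqref{eq:lambda_fk_fully} and completes the proof.
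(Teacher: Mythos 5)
Your overall strategy (Leray--Schauder on the finite-dimensional space $\Wh$, with the $\varepsilon$-regularization supplying the quadratic control of $\|\cdot\|^2+\|\cdot\|_{\mathrm{DG}}^2$ and Proposition 1 killing the implicit diffusion term in the a priori estimate) matches the paper's, and your Step on the uniform bound is essentially sound --- indeed your pointwise treatment of the reaction term, splitting the behaviour as $\lambda\to\pm\infty$ and absorbing the linear growth by Young's inequality against the $\varepsilon$-term, is more explicit than the paper's appeal to an unspecified constant $M(\lambda_h^k)$, and your elementary bound $te^t\ge -e^{-1}$ replaces the paper's entropy-convexity argument $(e^{\lambda_h^k}-e^v)s'(e^v)\le s(e^{\lambda_h^k})-s(e^v)$ to the same effect.

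The genuine gap is in the construction of the fixed-point map. You keep the frozen diffusion form $\vartheta\,\mathcal{A}(\mu;\lambda,\varphi_h)$ on the left-hand side as a bilinear form in $(\lambda,\varphi_h)$, so well-definedness of $S$ requires coercivity of $B_\mu=\tfrac{\varepsilon}{\Delta t}\langle\cdot,\cdot\rangle_*+\vartheta\,\mathcal{A}(\mu;\cdot,\cdot)$ for \emph{every} $\mu\in\Wh$, and you justify this by asserting $\mathcal{A}(\mu;\lambda,\lambda)\ge 0$ ``by a computation analogous to Proposition 1.'' But Proposition 1 is strictly a statement about the diagonal $\mu=\lambda$, and the computation does not transfer off the diagonal: there one must absorb the consistency terms $\int_F\averagel e^{\mu}\mathbf{D}\nabla\lambda\averager\cdot\jumpl\lambda\jumpr$ into the volume term $\int_\Omega e^{\mu}|\sqrt{\mathbf{D}}\nabla_h\lambda|^2$, and since $e^{\mu/2}\sqrt{\mathbf{D}}\nabla\lambda$ is not a polynomial the inverse trace inequality \eqref{eq:in_tr_in} forces you to pay the intra-element ratio $\sup_K e^{\mu}/\inf_K e^{\mu}$; the resulting penalty requirement scales like $e^{3\|\mu\|_{L^\infty(K)}}$, whereas $\eta(\mu)$ in \eqref{eq:penalty} supplies at most $\max\{(e^{\mu})_\pm\}\,e^{\|\mu\|_{L^\infty}}\le e^{2\|\mu\|_{L^\infty}}$ times $\zeta$. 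On the diagonal these exponential weights match exactly (which is why the jump of $e^{v/2}$ appears in \eqref{eq:ACoercivity}); off the diagonal they do not, so no fixed $\eta_0$ makes $B_\mu$ coercive uniformly, and for a given $\eta_0$ one cannot exclude indefiniteness of $B_\mu$ for $\mu$ with large intra-element oscillation. Since $|\mathcal{A}(\mu;\lambda,\lambda)|$ is only controlled by $C(\mu)\|\lambda\|_{\mathrm{DG}}^2$ with $C(\mu)$ possibly exceeding $\varepsilon/\Delta t$, the $\varepsilon$-term cannot rescue coercivity either, and $S(\mu)$ may fail to be defined. The paper avoids this entirely by a different splitting: the linear problem defining $\Phi(w,\sigma)$ carries \emph{only} the $\varepsilon$-weighted inner product on the left, while the whole nonlinear diffusion contribution $\sigma\vartheta\Delta t\,\mathcal{A}(w;w,\phi)$ --- evaluated entirely at the frozen $w$ --- is treated as data on the right, so Lax--Milgram applies with coercivity constant $\varepsilon$ independently of $w$ and $\eta_0$. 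You should either adopt that splitting or supply a separate proof of the off-diagonal nonnegativity; the latter is not available with the penalty as defined.
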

\begin{proof}
    The proof is based on the application of the Leray-Schauder theorem. For clarity, we subdivide the proof into 3 steps.
    \subparagraph*{Step 1: Definition of the operator $\Phi$.}
    First of all, let us introduce the fixed point operator $\Phi:\Wh\times[0,1]\rightarrow \Wh$ such that $\Phi(w,\sigma)=v$ with $v\in\Wh$ being the unique solution of the linear problem:
    \begin{equation}
     \begin{split}
     \varepsilon (v,\phi)_\Omega + & \varepsilon (\mathbf{D}\nabla_h v,\nabla_h \phi)_\Omega + \varepsilon (\zeta\jumpl v\jumpr, \jumpl\phi\jumpr)_{\facesinternal\cup\facesD} = \sigma(e^{\lambda_h^{k}}-e^w,\phi)_\Omega \\ + &  \sigma\left(\alpha\Delta t(\vartheta e^w+(1-\vartheta) e^{\lambda_h^{k}}) (1-(\vartheta e^w+(1-\vartheta) e^{\lambda_h^{k}})),\phi\right)_\Omega  \\ - & \sigma\vartheta\Delta t\mathcal{A}(w;w,\phi) - \sigma(1-\vartheta)\Delta t\mathcal{A}(\lambda_h^k;\lambda_h^k,\phi) +  \sigma\vartheta\Delta t F^{k+1}(\phi) + \sigma(1-\vartheta)\Delta t F^{k}(\phi)\qquad\forall\phi\in\Wh.
     \end{split}
     \label{eq:fpprob}
    \end{equation}
    \subparagraph*{Step 2: Compactness of $\Phi$.}
    $\Phi$ is well defined by the Lax-Milgram lemma, thanks to the coercivity and continuity on $\Wh$ of the left-hand side of \eqref{eq:fpprob} and to the continuity of the right-hand side of \eqref{eq:fpprob}. Finally, we observe that $\Phi(w,0)=0$. Due to the finite dimension of the space $\Wh$, these properties are enough to prove also the compactness of the operator. 
    \subparagraph*{Step 3: Uniform bound for all the fixed points.} To prove the property of uniform bound we take $v\in\Wh$ and $\sigma\in[0,1]$ such that $v=\Phi(v,\sigma)$. First of all, let us notice that we can bound the right-hand side of \eqref{eq:fpprob} by using the coercivity of $\mathcal{A}$ and the existence of a constant $M=M(\lambda_h^{k})$ such that $\alpha\Delta t(\vartheta e^v+(1-\vartheta) e^{\lambda_h^{k}}) (1-\vartheta e^v)v \leq M(\lambda_h^{k})$. Indeed, there holds
    \begin{equation}
    \label{eq:DGbound}
    \begin{split}
    \varepsilon \|v\|^2 + \varepsilon \| v\|^2_{\mathrm{DG}} = & \sigma(e^{\lambda_h^{k}}-e^v,v)_\Omega +  \sigma\left(\alpha\Delta t(\vartheta e^v+(1-\vartheta) e^{\lambda_h^{k}}) (1-(\vartheta e^v+(1-\vartheta) e^{\lambda_h^{k}})),v\right)_\Omega  \\ & - \sigma\vartheta\Delta t\mathcal{A}(v;v,v) - \sigma(1-\vartheta)\Delta t\mathcal{A}(\lambda_h^k;\lambda_h^k,v) +  \sigma\vartheta\Delta t F^{k+1}(v) + \sigma(1-\vartheta)\Delta t F^{k}(v) \\
     \leq & \sigma(e^{\lambda_h^{k}}-e^v,v)_\Omega - \sigma\left(\alpha\Delta t (1-\vartheta) e^{\lambda_h^{k}}(\vartheta e^v+(1-\vartheta) e^{\lambda_h^{k}}),v\right)_\Omega + \sigma M(\lambda_h^{k})\\ & - \sigma(1-\vartheta)\Delta t\mathcal{A}(\lambda_h^k;\lambda_h^k,v) +  \sigma\vartheta\Delta t F^{k+1}(v) + \sigma(1-\vartheta)\Delta t F^{k}(v).
     \end{split}
    \end{equation}
    Then, by introducing the function $s(x)=x(\log(x)-1)+1\geq 0$ and exploiting its convexity we obtain:
    \begin{equation}
        (e^{\lambda_h^{k}}-e^v)v = (e^{\lambda_h^{k}}-e^v)s'(e^v) \leq s(e^{\lambda_h^{k}})-s(e^v).
    \end{equation}
    Thus, using also the fact that $-s(e^v)\leq 0$ and relation \eqref{eq:DGbound} we obtain:
    \begin{equation*}
     \begin{split}
     \varepsilon \|v\|^2 + \varepsilon\| v\|^2_{\mathrm{DG}} \leq & \sigma\int_\Omega \left(s(e^{\lambda_h^{k}}) (1 + \alpha\Delta t \vartheta(1-\vartheta) e^{\lambda_h^{k}}) \right) - \sigma\left(\alpha\Delta t (1-\vartheta)^2 e^{2\lambda_h^{k}},v\right)_\Omega + \sigma M(\lambda_h^{k})\\ - & \sigma(1-\vartheta)\Delta t\mathcal{A}(\lambda_h^k;\lambda_h^k,v) +  \sigma\vartheta\Delta t F^{k+1}(v) + \sigma(1-\vartheta)\Delta t F^{k}(v). 
     \end{split}
    \end{equation*}
    Using Equation \eqref{eq:AContinuity} and the Young's inequality with suitable coefficients $\epsilon_1$ and $\epsilon_2$, we get:
    \begin{equation}
    \begin{split}
        \left(\varepsilon-\dfrac{3}{2}\sigma\Delta t \epsilon_2\right) \|v\|^2 + & \left(\varepsilon-\dfrac{\sigma(1-\vartheta)\Delta t \mu \epsilon_1}{2}\right)\| v\|^2_{\mathrm{DG}} \leq  \sigma\int_\Omega \left(s(e^{\lambda_h^{k}}) (1 + \alpha\Delta t \vartheta(1-\vartheta) e^{\lambda_h^{k}}) \right) \\ - & \sigma\alpha\Delta t (1-\vartheta) \|e^{2\lambda_h^{k}}\|^2 + \sigma M(\lambda_h^{k}) + \dfrac{\sigma(1-\vartheta)\Delta t}{2\epsilon_1} \mu \max_{K\in\partition} \{e^{\|\lambda_h^k\|_{L^\infty(K)}}\}^2 \| e^{\lambda_h^k}\|_{\mathrm{DG}}^2 \,\| \lambda_h^k\|_{\mathrm{DG}}^2 \\ + &  \dfrac{\sigma\Delta t}{2\epsilon_2} \left(\vartheta \|f(t^{k+1})\|^2 + (1-\vartheta)\|f(t^{k+1})\|^2 \right).
    \end{split}
    \end{equation}
    By applying the Leray-Schauder theorem \cite{salsa:EDP} we derive the existence of a solution for problem \eqref{eq:lambda_fk_fully}, and the proof is complete.
\end{proof}

\subsection{Convergence of the discrete solution}
In this section, we prove the convergence of the solution to the PolyDG fully discrete formulation in Equation \eqref{eq:lambda_fk_fully} with $\vartheta=1$ (Implicit Euler method) to the solution of the continuous problem. An additional assumption we make in this proof is the forcing-free model $f=0$. The result follows by extending the convergence theorem proved in \cite{bonizzoni_structure-preserving_2020} to the case pf polytopal/polyhedral meshes and high-order approximations. 

Let us start introducing the notion of entropy $S:[0,T]\rightarrow\mathbb{R}$ of the system \cite{jungel:entropy}, namely
\begin{equation}
    S(t) = \int_\Omega \left(u(t)(\log(u(t))-1)+1\right).
\end{equation}
To prove the convergence of the numerical solution, we need to show that the discrete entropy $S_h^{k} = \int_\Omega (e^{\lambda_h^k}({\lambda_h^k}-1)+1)$ decays as $k\rightarrow\infty$ \cite{bonizzoni_structure-preserving_2020}, and that the DG norm (see equation~\eqref{eq:DG-norm}) of the discrete solution is uniformly bounded.
\begin{remark}
The analysis in this section is performed only for the case $\vartheta=1$. The treatment of the general case $\vartheta\in[0,1]$ is not straightforward, due to the presence of the components from the previous timestep that cannot be easily treated and prevent to recover the decay of the discrete entropy. Nevertheless, as it will be demonstrated in the numerical result sections, the scheme exhibits optimal convergence rates for any $\vartheta\in[0,1]$. The extension of the analysis to the case $\vartheta\neq 1$ is under investigation and will be the subject of future research.
\end{remark}
\begin{proposition}
    Let Assumptions 1 and 2 hold and let $\eta_0$ defined as in Equation \eqref{eq:penalty2} be chosen sufficiently large. Let $\lambda_h^{k+1}$ be the solution to problem \eqref{eq:lambda_fk_fully} with $\vartheta=1$, $\varepsilon>0$ and homogeneous forcing term $f=0$. Then:
    \begin{equation}
       \left\|e^{\lambda^{k+1}_h/2}\right\|_\mathrm{DG}^2 \leq \dfrac{ 2 S_h^0}{\Delta t},
    \end{equation}
    where $S_h^0$ is the initial discrete entropy.
\end{proposition}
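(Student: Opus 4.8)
The plan is to run the \emph{discrete entropy method}: test the fully discrete scheme against the entropy variable $\lambda_h^{k+1}$ itself and show that the resulting identity forces the discrete entropy to dissipate, the dissipation being measured exactly by the DG-norm of $e^{\lambda_h^{k+1}/2}$. First I would specialize \eqref{eq:lambda_fk_fully} to $\vartheta=1$, $f=0$ and homogeneous Dirichlet data, and test it with $\varphi_h=\lambda_h^{k+1}\in\Wh$ (an admissible choice). This splits the identity into four contributions: the discrete time-derivative term $(\frac{e^{\lambda_h^{k+1}}-e^{\lambda_h^{k}}}{\Delta t},\lambda_h^{k+1})_\Omega$, the reaction term, the three regularizing $\varepsilon$-terms, and the diffusion form $\mathcal{A}(\lambda_h^{k+1};\lambda_h^{k+1},\lambda_h^{k+1})$.

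Then I would bound each piece. For the time-derivative term I would reuse the convexity of $s(x)=x(\log(x)-1)+1$ already exploited in the existence proof: since $s'(e^{\lambda})=\lambda$, the tangent-line inequality gives pointwise $(e^{\lambda_h^{k+1}}-e^{\lambda_h^{k}})\lambda_h^{k+1}\ge s(e^{\lambda_h^{k+1}})-s(e^{\lambda_h^{k}})$, so this term is bounded below by $(S_h^{k+1}-S_h^{k})/\Delta t$. For the diffusion form I would invoke the coercivity of Proposition~1 (valid because $\eta_0$ is taken large enough), giving $\mathcal{A}(\lambda_h^{k+1};\lambda_h^{k+1},\lambda_h^{k+1})\ge\tfrac12\|e^{\lambda_h^{k+1}/2}\|_{\mathrm{DG}}^2$. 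The three $\varepsilon$-terms equal $\tfrac{\varepsilon}{\Delta t}$ times $\|\lambda_h^{k+1}\|^2$, $\|\sqrt{\mathbf{D}}\nabla_h\lambda_h^{k+1}\|^2$ and $\|\sqrt{\zeta}\jumpl\lambda_h^{k+1}\jumpr\|_{\facesinternal\cup\facesD}^2$, hence are nonnegative and are discarded. Finally, for the reaction term I would use that, with $\alpha\ge0$ (the physical regime), the pointwise identity $-\alpha e^{\lambda}(1-e^{\lambda})\lambda=\alpha\lambda e^{\lambda}(e^{\lambda}-1)\ge0$ holds for every real $\lambda$, so this contribution is nonnegative as well.

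Collecting these, the tested identity (which equals zero) yields the discrete entropy dissipation inequality
\begin{equation*}
\frac{S_h^{k+1}-S_h^{k}}{\Delta t}+\frac12\left\|e^{\lambda_h^{k+1}/2}\right\|_{\mathrm{DG}}^2\le 0 .
\end{equation*}
This does two things at once. It shows $S_h^{k+1}\le S_h^{k}$, so by induction $S_h^{k}\le S_h^{0}$ for every $k$; and, using $s\ge0$ so that $S_h^{k+1}\ge0$, it gives $\tfrac12\|e^{\lambda_h^{k+1}/2}\|_{\mathrm{DG}}^2\le (S_h^{k}-S_h^{k+1})/\Delta t\le S_h^{k}/\Delta t\le S_h^{0}/\Delta t$, which is precisely the claimed bound after multiplying by $2$.

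I expect the only genuinely delicate point to be the sign of the reaction term after the exponential change of variables and after testing against $\lambda_h^{k+1}$ (rather than against $e^{\lambda_h^{k+1}}$): everything there rests on the elementary pointwise inequality above together with $\alpha\ge0$. The remaining ingredients—the entropy convexity bound and the coercivity of $\mathcal{A}$ from Proposition~1—are already available, and the argument closes cleanly precisely because $\vartheta=1$ places every nonlinear contribution at the new time level, so no previous-timestep cross terms survive (consistent with the Remark explaining why the case $\vartheta\neq1$ is problematic).
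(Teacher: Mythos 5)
Your proof is correct and follows essentially the same route as the paper: test with $\varphi_h=\lambda_h^{k+1}$, drop the nonnegative reaction and $\varepsilon$-terms, apply the coercivity of Proposition~1, and use the convexity of $s(x)=x(\log x-1)+1$ on the discrete time difference. The only (harmless) difference is that you obtain the monotonicity $S_h^{k+1}\le S_h^k$ directly from your own dissipation inequality and conclude $S_h^k\le S_h^0$ by induction, whereas the paper cites the discrete entropy inequality of \cite{bonizzoni_structure-preserving_2020} for that step.
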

\begin{proof}
    Let us consider the problem \eqref{eq:lambda_fk_fully} with $\vartheta=1$ and $\varphi_h = \lambda_h^{k+1}$:
    \begin{equation*}
     \Delta t\left(\alpha e^{\lambda_h^{k+1}} \left(e^{\lambda_h^{k+1}}-1\right),\lambda_h^{k+1}\right)_\Omega + \varepsilon \|\lambda_h^{k+1}\|^2 + \varepsilon \|\lambda_h^{k+1}\|_\mathrm{DG}^2 +  \Delta t\mathcal{A}(\lambda_h^{k+1};\lambda_h^{k+1},\lambda_h^{k+1}) = \left(e^{\lambda_h^{k}}-e^{\lambda_h^{k+1}},\lambda_h^{k+1}\right)_\Omega.
    \end{equation*}
    By observing that $e^v \left(e^v-1\right)v\geq 0$ for each $v\in\Wh$ and by using Equation \eqref{eq:ACoercivity}, we obtain:
    \begin{equation*}
    \frac{\Delta t}{2} \|e^{\lambda_h^{k+1}/2}\|_\mathrm{DG}^2 \leq \left(e^{\lambda_h^{k}}-e^{\lambda_h^{k+1}},\lambda_h^{k+1}\right)_\Omega.
    \end{equation*}
    Exploiting the convexity of the density of entropy function $s(v) = v(\log(v)-1)+1$ and noticing that $v = s'(v)$, we obtain:
    \begin{equation}
    \frac{\Delta t}{2} \|e^{\lambda_h^{k+1}/2}\|_\mathrm{DG}^2 \leq S_h^k - S_h^{k+1} \leq S_h^k \leq S_h^0,
    \label{eq:finalstepDG}
    \end{equation}
    where in the last step we used the discrete entropy inequality (\cite{bonizzoni_structure-preserving_2020}, Lemma 7). From Equation \eqref{eq:finalstepDG}, the thesis follows.
\end{proof}
\begin{theorem}[Convergence]
   Let Assumptions 1 and 2 hold and let $\eta_0$ be sufficiently large. Let $\varepsilon> 0$, $\vartheta=1$, $\Delta t \in (0,1)$, and let $\lambda_h^{k+1}\in\Wh$ be a solution to \eqref{eq:lambda_fk_fully} with homogeneous forcing term $f=0$. Assume that $\lambda_h^k\in\Wh$ is such that $e^{\lambda_h^k}\rightarrow c^{k}$ strongly in $L^2(\Omega)$ as $(\varepsilon,h)\rightarrow 0$. Then there exists a unique strong solution $c^{k+1}\in H^2(\Omega)$ to:
    \begin{equation}
    \begin{dcases}
    \dfrac{c^{k+1}-c^{k}}{\Delta t} =\nabla \cdot(\mathbf{D} \nabla\, c^{k+1}) + \alpha\,c^{k+1}(1-c^{k+1}), & \mathrm{in}\,\Omega,\\
    c^{k+1} = c_\mathrm{D} = e^{\lambda_\mathrm{D}}, & \mathrm{on}\,\Gamma_D,\\
    (\mathbf{D} \nabla c^{k+1})\cdot\boldsymbol{n} = 0, & \mathrm{on}\,\Gamma_N,\\
    \end{dcases}
\end{equation}
such that $e^{\lambda_h^{k+1}}\rightarrow c^{k+1}$ strongly in $L^2(\Omega)$ as $(\varepsilon,h)\rightarrow 0$.
\end{theorem}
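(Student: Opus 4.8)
The plan is to follow the compactness strategy of \cite{bonizzoni_structure-preserving_2020}, adapted to the polytopal and high-order setting, in four stages: (i) collect a priori bounds that are uniform in $(\varepsilon,h)$; (ii) run a discrete compactness argument to extract a convergent subsequence; (iii) pass to the limit in \eqref{eq:lambda_fk_fully} to identify $c^{k+1}$ as a weak solution of the stated time-discrete elliptic problem; and (iv) upgrade to $H^2$ regularity and prove uniqueness, which then forces convergence of the whole family rather than a subsequence. The decisive structural simplification coming from the exponential change of variable is that the bulk diffusion is \emph{linear} in $c:=e^{\lambda}$: since $e^{\lambda}\nabla_h\lambda=\nabla_h e^{\lambda}$, the volume part of $\mathcal{A}(\lambda_h^{k+1};\lambda_h^{k+1},\varphi_h)$ equals $\int_\Omega\mathbf{D}\nabla_h e^{\lambda_h^{k+1}}\cdot\nabla_h\varphi_h$ up to consistency and penalty face terms, so in the variable $c_h:=e^{\lambda_h^{k+1}}$ the principal part is the ordinary symmetric interior penalty operator, whose consistency I will exploit in the limit passage.

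First I would assemble the estimates. The preceding proposition gives $\|e^{\lambda_h^{k+1}/2}\|_{\mathrm{DG}}^2\le 2S_h^0/\Delta t$, uniform in $(\varepsilon,h)$; writing $w_h:=e^{\lambda_h^{k+1}/2}$, this controls $\|\sqrt{\mathbf{D}}\nabla_h w_h\|$ together with the weighted jump seminorm $\|\sqrt{\zeta}\jumpl w_h\jumpr\|_{\facesinternal\cup\facesD}$ through the definition \eqref{eq:DG-norm}. Because the penalty weight $\zeta$ in \eqref{eq:penalty2} scales like $h_K^{-1}$ at fixed elementwise degree, a uniform bound on the weighted jumps forces the unweighted jumps of $w_h$ to vanish as $h\to0$, which is the mechanism that will restore $H^1$-conformity of the limit. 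In addition, the uniform-bound step of the existence proof yields $\varepsilon\|\lambda_h^{k+1}\|^2+\varepsilon\|\lambda_h^{k+1}\|_{\mathrm{DG}}^2\le C$ with $C$ independent of $\varepsilon$, so the three regularizing terms carrying the factor $\varepsilon/\Delta t$ in \eqref{eq:lambda_fk_fully}, when tested against a fixed $\varphi$, are $O(\sqrt{\varepsilon})$ and drop out as $\varepsilon\to0$.

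Next comes compactness and the limit passage. Invoking a discrete Sobolev embedding and a discrete Rellich–Kondrachov result for polytopal DG spaces — precisely where Assumption 2, and in particular the submesh condition 2.3 underpinning the inverse trace inequality \eqref{eq:in_tr_in}, enters — the uniform $\mathrm{DG}$ bound on $w_h$ delivers a subsequence with $w_h\to w$ strongly in $L^2(\Omega)$, bounded in $L^4(\Omega)$ (hence strongly in $L^4$ by interpolation), with $\sqrt{\mathbf{D}}\nabla_h w_h\rightharpoonup\sqrt{\mathbf{D}}\nabla w$ weakly in $L^2$ and vanishing jumps, so that $w\in H^1(\Omega)$ and $w$ carries the correct Dirichlet trace, giving $c^{k+1}:=w^2=c_\mathrm{D}$ on $\Gamma_D$. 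Then $e^{\lambda_h^{k+1}}=w_h^2\to c^{k+1}$ strongly in $L^2(\Omega)$, which lets the quadratic reaction $\alpha c_h(1-c_h)$ pass to the limit. For the diffusion I would write $\sqrt{\mathbf{D}}\nabla_h e^{\lambda_h^{k+1}}=2\,w_h\,\sqrt{\mathbf{D}}\nabla_h w_h$ and combine the strong $L^4$ convergence of $w_h$ with the weak $L^2$ convergence of $\sqrt{\mathbf{D}}\nabla_h w_h$ to identify the weak limit as $\sqrt{\mathbf{D}}\nabla c^{k+1}$; the consistency and penalty face terms vanish because the jumps tend to zero and the scheme is consistent. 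Testing against polynomial projections of smooth functions and using the hypothesis $e^{\lambda_h^k}\to c^k$ in $L^2$ then shows $c^{k+1}$ solves the weak form of the stated boundary value problem.

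Finally, elliptic regularity for this linear-principal-part problem with the coercive bounded tensor $\mathbf{D}$ promotes $c^{k+1}$ to $H^2(\Omega)$, and uniqueness follows from monotonicity: the map $c\mapsto c/\Delta t-\nabla\!\cdot(\mathbf{D}\nabla c)-\alpha c(1-c)$ is strictly monotone on the admissible range $c\in[0,1]$ once $\Delta t$ is small relative to $\|\alpha\|_{L^\infty(\Omega)}$, since the reaction derivative $\alpha(1-2c)$ is then dominated by $1/\Delta t$; uniqueness of the limit upgrades the subsequential convergence to convergence of the full family $e^{\lambda_h^{k+1}}$. I expect the genuine obstacle to be stage (iii)'s compactness input: establishing a discrete Sobolev embedding and Rellich-type compactness on general polygonal and polyhedral meshes with elementwise-varying polynomial degree, so that the uniform $\mathrm{DG}$ bound on $e^{\lambda_h^{k+1}/2}$ truly upgrades to strong $L^2$ (and $L^4$) convergence. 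This is the technical heart of transporting the simplicial argument of \cite{bonizzoni_structure-preserving_2020} to the polytopal case, and it is exactly the point that relies on the submesh regularity of Assumption 2.3.
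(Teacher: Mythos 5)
Your proposal takes essentially the same route the paper intends: the paper gives no details beyond a one-line deferral to the compactness argument of the cited reference together with Propositions 1 and 4, and your sketch correctly fills in exactly those ingredients (the uniform DG bound on $e^{\lambda_h^{k+1}/2}$ from Proposition 4, the $O(\sqrt{\varepsilon})$ disappearance of the regularizing terms, discrete Rellich-type compactness on polytopal meshes via Assumption 2.3 as the technical crux, the strong-times-weak limit identification of the diffusion term, and elliptic regularity plus monotonicity for uniqueness). The only point worth flagging is that your uniqueness argument requires $\Delta t$ small relative to $\|\alpha\|_{L^\infty(\Omega)}$, whereas the theorem statement only assumes $\Delta t\in(0,1)$; this smallness condition is implicit in the reference the paper follows, so it is a presentational rather than a substantive gap.
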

The proof follows the same steps as in \cite{bonizzoni_structure-preserving_2020} and it makes use of Propositions 1 and 4, as well as of the extensions of variational inequalities valid for polygonal/polyhedral meshes.

\section{Numerical results: verification}
\label{sec:numericalresults}
In this section, we aim at verifying the accuracy of the method presented in section~\ref{sec:discret}.

\subsection{Test case 1: convergence analysis in two dimensions}
\begin{figure}[t!]
    \begin{subfigure}[b]{0.5\textwidth}
          \resizebox{\textwidth}{!}{\definecolor{mycolor2}{rgb}{0.00000,1.00000,1.00000}%
\pgfplotsset{
  log x ticks with fixed point/.style={
      xticklabel={
        \pgfkeys{/pgf/fpu=true}
        \pgfmathparse{exp(\tick)}%
        \pgfmathprintnumber[fixed  zerofill, precision=2]{\pgfmathresult}
        \pgfkeys{/pgf/fpu=false}
      }
  }
}
\begin{tikzpicture}

\begin{axis}[%
width=3.875in,
height=2.36in,
at={(2.6in,1.099in)},
scale only axis,
xmode=log,
xmin=0.064,
xmax=0.3239,
xminorticks=true,
xlabel = {$h$ [-]},
ylabel = {$||e^{\lambda(T)}-e^{\lambda_h(T)}||_{\mathrm{DG}}$},
ymode=log,
ymin=1e-9,
ymax=0.2,
yminorticks=true,
axis background/.style={fill=white},
title style={font=\bfseries},
xmajorgrids,
xminorgrids,
ymajorgrids,
yminorgrids,
legend style={legend cell align=left, align=left, draw=white!15!black}
]
              
\addplot [color=red, line width=2.0pt]
  table[row sep=crcr]{%
0.322736007494350  0.187529085157431\\
0.181290869729279  0.111533379831984\\
0.102590411249172  0.058781669310102\\
0.056694075094424  0.027029795823344\\
};

\addplot [color=orange, line width=2.0pt]
  table[row sep=crcr]{%
0.322736007494350   0.158409922118760\\
0.181290869729279	0.056721383957223\\
0.102590411249172	0.018791833298399\\
0.056694075094424	0.005855364223746\\
};

\addplot [color=green, line width=2.0pt]
  table[row sep=crcr]{%
0.322736007494350	0.030782209106854\\
0.181290869729279   0.004523929485229\\
0.102590411249172	8.96794311136e-04\\
0.056694075094424   1.32276558454e-04\\
};

\addplot [color=mycolor2, line width=2.0pt]
  table[row sep=crcr]{%
0.322736007494350   0.004273178773731\\
0.181290869729279	4.015369895557630e-04\\
0.102590411249172	4.370693990466424e-05\\
0.056694075094424	3.781005462336538e-06\\
};
   
\addplot [color=blue, line width=2.0pt]
  table[row sep=crcr]{%
0.322736007494350	8.732097009042834e-04\\
0.181290869729279	3.534680062376802e-05\\
0.102590411249172	1.884972755530112e-06\\
0.056694075094424	7.721198294211290e-08\\
};
   
\addplot [color=purple, line width=2.0pt]
  table[row sep=crcr]{%
0.322736007494350	9.557286603164061e-05\\
0.181290869729279	2.297028994540833e-06\\
0.102590411249172	8.757336082139629e-08\\
0.056694075094424	1.783730052077826e-09\\
};

\node[right, align=left, text=black, font=\footnotesize]
at (axis cs:0.1005,0.03) {$1$};

\addplot [color=black, line width=1.5pt]
  table[row sep=crcr]{%
0.100   0.03500\\
0.075   0.02625\\
0.100   0.02625\\
0.100   0.03500\\
};

\node[right, align=left, text=black, font=\footnotesize]
at (axis cs:0.1005,0.008) {$2$};

\addplot [color=black, line width=1.5pt]
  table[row sep=crcr]{%
0.100   0.010\\
0.075   0.005625\\
0.100   0.005625\\
0.100   0.010\\
};

\node[right, align=left, text=black, font=\footnotesize]
at (axis cs:0.1005,2.5e-4) {$3$};

\addplot [color=black, line width=1.5pt]
  table[row sep=crcr]{%
0.100   4.00e-04\\
0.075   1.68e-04\\
0.100   1.68e-04\\
0.100   4.00e-04\\
};

\node[right, align=left, text=black, font=\footnotesize]
at (axis cs:0.1005,9e-6) {$4$};

\addplot [color=black, line width=1.5pt]
  table[row sep=crcr]{%
0.100   2.000e-05\\
0.075   6.328e-06\\
0.100   6.328e-06\\
0.100   2.000e-05\\
};

\node[right, align=left, text=black, font=\footnotesize]
at (axis cs:0.1005,4e-7) {$5$};

\addplot [color=black, line width=1.5pt]
  table[row sep=crcr]{%
0.100   8.000e-07\\
0.075   1.898e-07\\
0.100   1.898e-07\\
0.100   8.000e-07\\
};

\node[right, align=left, text=black, font=\footnotesize]
at (axis cs:0.1005,2e-8) {$6$};

\addplot [color=black, line width=1.5pt]
  table[row sep=crcr]{%
0.100   4.0000e-8\\
0.075   7.7119e-9\\
0.100   7.7119e-9\\
0.100   4.0000e-8\\
};

\end{axis}
\end{tikzpicture}
    \end{subfigure}%
    \begin{subfigure}[b]{0.5\textwidth}
        \resizebox{\textwidth}{!}{\definecolor{mycolor2}{rgb}{0.00000,1.00000,1.00000}%
\pgfplotsset{
  log x ticks with fixed point/.style={
      xticklabel={
        \pgfkeys{/pgf/fpu=true}
        \pgfmathparse{exp(\tick)}%
        \pgfmathprintnumber[fixed  zerofill, precision=2]{\pgfmathresult}
        \pgfkeys{/pgf/fpu=false}
      }
  }
}
\begin{tikzpicture}

\begin{axis}[%
width=3.875in,
height=2.36in,
at={(2.6in,1.099in)},
scale only axis,
xmode=log,
xmin=0.064,
xmax=0.3239,
xminorticks=true,
xlabel = {$h$ [-]},
ylabel = {$||e^{\lambda(T)}-e^{\lambda_h(T)}||_{\Omega}$},
ymode=log,
ymin=1e-12,
ymax=0.003,
yminorticks=true,
axis background/.style={fill=white},
title style={font=\bfseries},
xmajorgrids,
xminorgrids,
ymajorgrids,
yminorgrids,
legend style={legend cell align=left, align=left, draw=white!15!black}
]
                
\addplot [color=red, line width=2.0pt]
  table[row sep=crcr]{%
0.322736007494350  0.002693476063738\\
0.181290869729279  9.59884054064e-04\\
0.102590411249172  3.63982145118e-04\\
0.056694075094424  1.20592832373e-04\\
};
\addlegendentry{$p=1$}

\addplot [color=orange, line width=2.0pt]
  table[row sep=crcr]{%
0.322736007494350   0.001037414246851\\
0.181290869729279	2.36009544431e-04\\
0.102590411249172	4.99995639178e-05\\
0.056694075094424	1.06982486993e-05\\
};
\addlegendentry{$p=2$}
  
\addplot [color=green, line width=2.0pt]
  table[row sep=crcr]{%
0.322736007494350	1.458160611872059e-04\\
0.181290869729279   1.410147730893084e-05\\
0.102590411249172	1.663450250767421e-06\\
0.056694075094424   1.521934292184086e-07\\
};
\addlegendentry{$p=3$}

\addplot [color=mycolor2, line width=2.0pt]
  table[row sep=crcr]{%
0.322736007494350   1.645108891244398e-05\\
0.181290869729279	1.102623133964971e-06\\
0.102590411249172	8.594904745523011e-08\\
0.056694075094424	4.605499850873054e-09\\
};
\addlegendentry{$p=4$}
      
\addplot [color=blue, line width=2.0pt]
  table[row sep=crcr]{%
0.322736007494350	2.472107216747536e-06\\
0.181290869729279	7.268119497403347e-08\\
0.102590411249172	3.413759704328640e-09\\
0.056694075094424	1.002091424789810e-10\\
};
\addlegendentry{$p=5$}

\addplot [color=purple, line width=2.0pt]
  table[row sep=crcr]{%
0.322736007494350	2.337545229693058e-07\\
0.181290869729279	3.316181559449441e-09\\
0.102590411249172	8.529710105553027e-11\\
0.056694075094424	1.204915580463978e-12\\
};
\addlegendentry{$p=6$}

\node[right, align=left, text=black, font=\footnotesize]
at (axis cs:0.1005,0.0002) {$2$};

\addplot [color=black, line width=1.5pt]
  table[row sep=crcr]{%
0.100   0.0002\\
0.075   0.0001125\\
0.100   0.0001125\\
0.100   0.0002\\
};

\node[right, align=left, text=black, font=\footnotesize]
at (axis cs:0.1005,1.5e-5) {$3$};

\addplot [color=black, line width=1.5pt]
  table[row sep=crcr]{%
0.100   2.0000e-5\\
0.075   8.4375e-6\\
0.100   8.4375e-6\\
0.100   2.0000e-5\\
};

\node[right, align=left, text=black, font=\footnotesize]
at (axis cs:0.1005,4e-7) {$4$};

\addplot [color=black, line width=1.5pt]
  table[row sep=crcr]{%
0.100   8.000e-07\\
0.075   2.531e-07\\
0.100   2.531e-07\\
0.100   8.000e-07\\
};

\node[right, align=left, text=black, font=\footnotesize]
at (axis cs:0.1005,2e-8) {$5$};

\addplot [color=black, line width=1.5pt]
  table[row sep=crcr]{%
0.100   5.000e-8\\
0.075   1.186e-8\\
0.100   1.186e-8\\
0.100   5.000e-8\\
};

\node[right, align=left, text=black, font=\footnotesize]
at (axis cs:0.1005,7e-10) {$6$};

\addplot [color=black, line width=1.5pt]
  table[row sep=crcr]{%
0.100   2.000e-09\\
0.075   3.559e-10\\
0.100   3.559e-10\\
0.100   2.000e-09\\
};

\node[right, align=left, text=black, font=\footnotesize]
at (axis cs:0.1005,1e-11) {$7$};

\addplot [color=black, line width=1.5pt]
  table[row sep=crcr]{%
0.100   4.000e-11\\
0.075   5.339e-12\\
0.100   5.339e-12\\
0.100   4.000e-11\\
};

\end{axis}
\end{tikzpicture}
    \end{subfigure}%
    \caption{Test case 1: computed errors and convergence rates in DG-norm (left) and $L^2$-norm (right), $\Delta t = 10^{-6}$.}
    \label{fig:errors2D}
\end{figure}
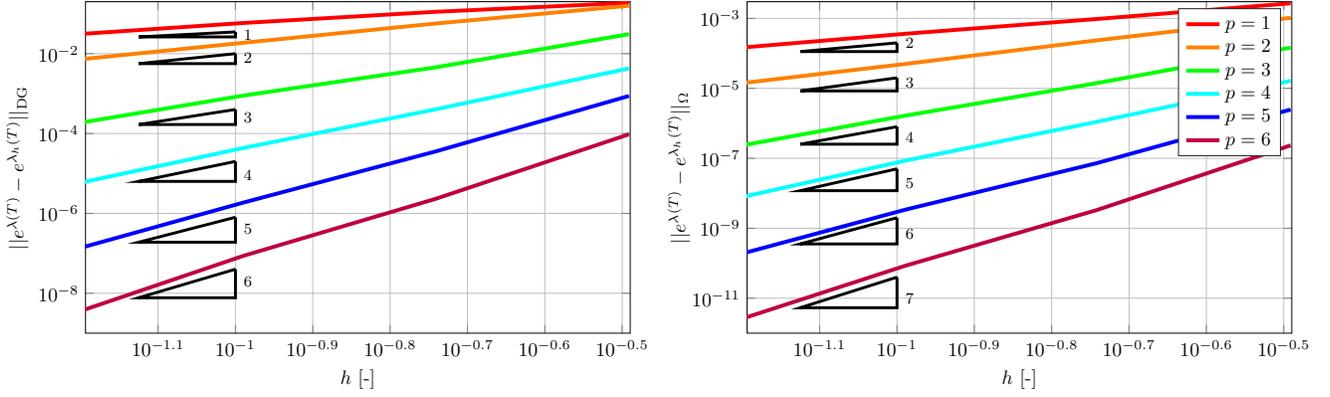
\begin{figure}[t!]
    \begin{subfigure}[b]{0.5\textwidth}
          \resizebox{\textwidth}{!}{\definecolor{EI}{rgb}{0.00000,0.50000,1.00000}%
\definecolor{CN}{rgb}{1.00000,0.00000,0.50000}%
\pgfplotsset{
  log x ticks with fixed point/.style={
      xticklabel={
        \pgfkeys{/pgf/fpu=true}
        \pgfmathparse{exp(\tick)}%
        \pgfmathprintnumber[fixed  zerofill, precision=2]{\pgfmathresult}
        \pgfkeys{/pgf/fpu=false}
      }
  }
}
\begin{tikzpicture}

\begin{axis}[%
width=3.875in,
height=2.36in,
at={(2.6in,1.099in)},
scale only axis,
xmode=log,
xmin=0.0125,
xmax=0.1000,
xminorticks=true,
xlabel = {$\Delta t$ [-]},
ylabel = {$||e^{\lambda(T)}-e^{\lambda_h(T)}||$},
ymode=log,
ymin=1e-7,
ymax=0.01,
yminorticks=true,
axis background/.style={fill=white},
title style={font=\bfseries},
title={Convergence with respect to the timestep $\Delta t$},
xmajorgrids,
xminorgrids,
ymajorgrids,
yminorgrids,
legend style={legend cell align=left, align=left, draw=white!15!black}
]
                
\addplot [color=CN, dashed, line width=2.0pt]
  table[row sep=crcr]{%
0.1000  9.15859202703e-05\\
0.0500  2.29548010472e-05\\
0.0250  5.74335419942e-06\\
0.0125  1.44530499597e-06\\
};
\addlegendentry{$\vartheta=1/2$: $\mathrm{DG}$-norm}

\addplot [color=CN, line width=2.0pt]
  table[row sep=crcr]{%
0.1000  2.02052524445e-05\\
0.0500  5.05236927068e-06\\
0.0250  1.26309239233e-06\\
0.0125  3.15779417345e-07\\
};
\addlegendentry{$\vartheta=1/2$: $L^2$-norm}

\addplot [color=EI, dashed, line width=2.0pt]
  table[row sep=crcr]{%
0.1000  0.007541148224797\\
0.0500  0.003702862354079\\
0.0250  0.001834823252392\\
0.0125  0.000913302282371\\
};
\addlegendentry{$\vartheta=1$: $\mathrm{DG}$-norm}
 
\addplot [color=EI, line width=2.0pt]
  table[row sep=crcr]{%
0.1000  0.001654753912983\\
0.0500  0.000812476266045\\
0.0250  0.000402583319779\\
0.0125  0.000200387356979\\
};
\addlegendentry{$\vartheta=1$: $L^2$-norm}
 
\node[right, align=left, text=black, font=\footnotesize]
at (axis cs:0.0225,0.00018) {$1$};

\addplot [color=black, line width=1.5pt]
  table[row sep=crcr]{%
0.022   0.00022\\
0.015   0.00015\\
0.022   0.00015\\
0.022   0.00022\\
};

\node[right, align=left, text=black, font=\footnotesize]
at (axis cs:0.0225,3.5e-7) {$2$};

\addplot [color=black, line width=1.5pt]
  table[row sep=crcr]{%
0.022   5.0e-7\\
0.015   2.3e-7\\
0.022   2.3e-7\\
0.022   5.0e-7\\
};

\end{axis}
\end{tikzpicture}
         \caption{Convergence in time with $p=1$.}
          \label{fig:errors2Dtime}
    \end{subfigure}%
    \begin{subfigure}[b]{0.5\textwidth}
        \resizebox{\textwidth}{!}{\definecolor{EI}{rgb}{0.00000,0.50000,1.00000}%
\definecolor{CN}{rgb}{1.00000,0.00000,0.50000}%

\begin{tikzpicture}
\begin{axis}[%
width=3.875in,
height=2.36in,
at={(2.6in,1.099in)},
scale only axis,
xmin=1,
xmax=8,
xlabel style={font=\color{white!15!black}},
xlabel={$p$},
ymode=log,
ymin=1e-10,
ymax=2.5,
yminorticks=true,
ylabel style={font=\color{white!15!black}},
ylabel={$||e^{\lambda(T)}-e^{\lambda_h(T)}||$},
axis background/.style={fill=white},
title={Convergence with respect to the degree $p$},
xmajorgrids,
ymajorgrids,
yminorgrids,
legend style={legend cell align=left, align=left, draw=white!15!black}
]

\addplot [color=CN, dashed, line width=2.0pt]
  table[row sep=crcr]{%
1	0.187529085157431\\
2	0.158409922118760\\
3	0.030782209106854\\
4	0.004273178773731\\
5   0.000873209700904\\
6	0.000095572866032\\
7   0.000022704839818\\
8   0.000002008866318\\
};
\addlegendentry{$\vartheta=1/2$: $\mathrm{DG}$-norm}

\addplot [color=CN, line width=2.0pt]
  table[row sep=crcr]{%
1	0.002693476063738\\
2	0.001037414246851\\
3	0.000145816061187\\
4   0.000016451088912\\
5   0.000002472107217\\
6	0.000000233754523\\
7   0.000000040090218\\
8   0.000000003139794\\
};
\addlegendentry{$\vartheta=1/2$: $L^2$-norm}
   

\end{axis}
\end{tikzpicture}
         \caption{Convergence in $p$ with $\Delta t = 10^{-6}$.}
        \label{fig:errors2Dpoly}
    \end{subfigure}%
    \caption{Test case 1: computed errors and convergence rates in convergence concerning timestep (left) and polynomial degree (right) with $N_\mathrm{el} = 30$.}
\end{figure}
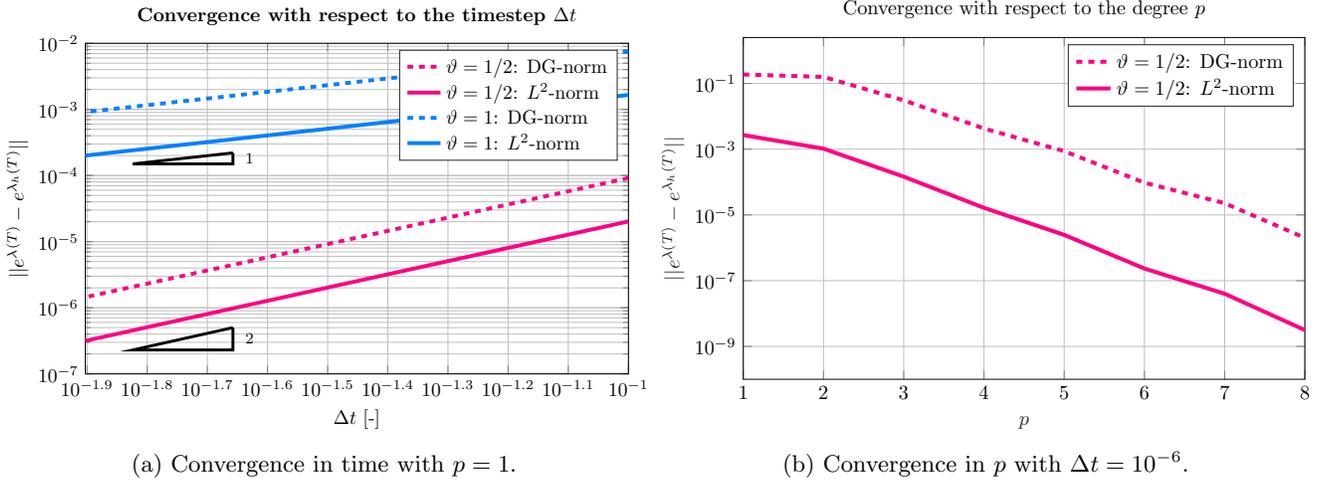
For the numerical tests in this section, we use Lymph library \cite{lymphpaper} to solve the FK equation $(d=2)$. We define the domain $\Omega=(0,1)^2$, which we discretize by means of a polygonal mesh obtained by using PolyMesher \cite{Polymesher}. Concerning the time discretization, we use a timestep $\Delta t = 10^{-6}$ and the final time $T=2\times 10^{-5}$. We consider the following manufactured exact solution:
\begin{equation}
    \lambda(x,y,t) = \log\left((\cos(\pi x)\cos(\pi y)+2) e^{-t}\right).
\end{equation}
We fix the physical parameters as follows: $\mathbf{D}=\mathbf{I}$ and $\alpha=0.1$. The forcing term and the Dirichlet boundary conditions are derived accordingly. To solve the resulting nonlinear system we adopt a Newton method with tolerance equal to $\epsilon = 10^{-10}$.
\par
In Figure~\ref{fig:errors2D}, we report the computed errors in both the DG and $L^2$ norms at the final time. We have performed the convergence test keeping fixed the polynomial order of the space approximation $p_K=p=1,...,6$ $\forall K \in \partition$ and using different mesh refinements $(N_\mathrm{el}= 30,100,300,1000)$. It can be observed that the slope of error decrease is equal to the polynomial degree $p$ for the DG-norm and equal to $p+1$ for the $L^2$-norm.
\par
In Figure~\ref{fig:errors2Dtime}, we report the computed errors with respect to the timestep $\Delta t$, using both Crank-Nicolson $(\vartheta=0.5)$ and the Implicit Euler $(\vartheta=1)$ schemes. The space discretization is computed on a mesh of $N_\mathrm{el}=1000$ elements and with polynomial degree $p=6$. As expected, the use of the Crank-Nicolson method leads to a second-order convergence whereas the error decays with a first-order rate if the implicit Euler scheme is employed. We remark that the case $\vartheta=1$ is fully covered by our theoretical analysis, whereas the proof of convergence for $\vartheta=1/2$ is under investigation.
\par
A convergence analysis with respect to the polynomial order $p$ is also performed on a coarse mesh of 30 elements and with a time integration based on the Crank-Nicolson scheme with timestep $\Delta t = 10^{-6}$. The results are reported in Figure~\ref{fig:errors2Dpoly}, where can observe exponential convergence can be observed.

\subsection{Test case 2: Travelling waves in two dimensions}
In this section, we exployt the positivity-preserving PolyDG formulation to simulate a traveling-wave solution of the FK equation in 2D, with the aim of comparing the formulation we propose in this article, with the (non positivity-preserving) scheme introduces in \cite{corti:FK}. The manufactured solution is of the form:
\begin{equation}
    e^{\lambda(x,y,t)} = c(x,y,t) = \psi(x-vt) = \psi(\xi).
\end{equation}
By substituting it in Equation \eqref{eq:fk_strong} with $f=0$, we obtain the following equivalent system of ordinary differential equations:
\begin{equation}
    \begin{dcases}
        \chi'(\xi) = -\dfrac{v}{d}\chi(\xi) + \dfrac{1}{d}\psi(\xi)(\psi(\xi)-1) & \xi\in(0,T), \\
        \psi'(\xi) = \chi(\xi) & \xi\in(0,T), \\
    \end{dcases}
\end{equation}
where we have used the assumption of isotropic diffusion tensor $\mathbf{D}=d\mathbf{I}$. In particular, we fix $d=10^{-3}$, $\alpha=1$ and penalty parameter $\eta_0=1$. Concerning the wave's parameters we take the speed $v=0.1$ and the initial data $\psi(0)=1$ and $\chi(0)=-10^{-2}$. We consider a rectangle $\Omega=(0, 5)\times(0, 1)$ as domain. We present the results of two simulations, with different final times $T=5$ and $T=10$ and timestep $\Delta t = 10^{-2}$. Concerning the nonlinear Newton solver, we fix a tolerance $\epsilon=10^{-6}$. 
\par
\begin{table}[t]
    \centering
    \begin{tabular}{c|c|C|C|c|C|C}
    \hline
    \multicolumn{7}{c}{\textbf{Positivity-preserving method}} \\
    \hline
    \hline
    \textbf{Method}
    & \multicolumn{3}{c|}{\textbf{$h=0.72802$}} 
    & \multicolumn{3}{c}{\textbf{$h=0.41057$}}
    \\    \hline
    $p$  
    & \textbf{DOFs} & $T = 5$ & $T = 10$ 
    & \textbf{DOFs} & $T = 5$ & $T = 10$ 
    \\ \hline
    $\boldsymbol{p=1}$ 
    & $90$   & $1.63\times10^{0}$  & $5.71\times10^{-1}$  
    & $300$  & $1.05\times10^{-2}$ & $3.34\times10^{-2}$ 
    \\ \hline
    $\boldsymbol{p=2}$
    & $180$  & $7.02\times10^{-2}$ & $4.67\times10^{-2}$ 
    & $600$  & $2.89\times10^{-3}$ & $5.35\times10^{-3}$ 
    \\ \hline
    $\boldsymbol{p=3}$
    & $300$  & $7.54\times10^{-2}$ & $5.77\times10^{-2}$ 
    & $1000$ & $4.52\times10^{-2}$ & $1.03\times10^{-1}$ 
    \\ \hline    
    $\boldsymbol{p=4}$
    & $450$  & $1.20\times10^{-2}$ & $5.67\times10^{-2}$ 
    & $1500$ & $1.37\times10^{-1}$  & $1.76\times10^{-1}$ 
    \\ \hline    
    \end{tabular}

    \begin{tabular}{c|c|C|C|c|C|C}
    \hline
    \multicolumn{7}{c}{\textbf{DG method \cite{corti:FK}}}  \\
    \hline
    \hline
    \textbf{Method}
    & \multicolumn{3}{c|}{\textbf{$h=0.72802$}} 
    & \multicolumn{3}{c}{\textbf{$h=0.41057$}}
    \\    \hline
    $p$  
    & \textbf{DOFs} & $T = 5$ & $T = 10$ 
    & \textbf{DOFs} & $T = 5$ & $T = 10$   
    \\ \hline
    $\boldsymbol{p=1}$ 
    & $90$   & $6.38\times10^{3}$ & $9.44\times10^{3}$
    & $300$  & $2.19\times10^{4}$ & $2.30\times10^{4}$
    \\ \hline
    $\boldsymbol{p=2}$
    & $180$  & $2.48\times10^{0}$  & $1.06\times10^{5}$ 
    & $600$  & $1.76\times10^{0}$  & $1.01\times10^{4}$ 
    \\ \hline
    $\boldsymbol{p=3}$
    & $300$  & $1.07\times10^{-1}$ & $1.29\times10^{5}$
    & $1000$ & $7.80\times10^{-3}$ & $1.79\times10^{-1}$ 
    \\ \hline    
    $\boldsymbol{p=4}$
    & $450$  & $1.54\times10^{-2}$  & $6.47\times10^{-1}$  
    & $1500$ & $8.14\times10^{-4}$  & $4.50\times10^{-3}$
    \\ \hline    
    \end{tabular}
    \caption{Comparison of the computed errors in the DG-norm based on employing the proposed positivity-preserving scheme and the DG method of \cite{corti:FK}, for different polynomial degrees, different mesh sizes, and different final times.}
    \label{tab:errorstimerefL2}
\end{table}

\begin{table}[t]
    \centering
    \begin{tabular}{c|c|C|C|c|C|C}
    \hline
    \multicolumn{7}{c}{\textbf{Positivity-preserving method}} \\
    \hline
    \hline
    \textbf{Method}
    & \multicolumn{3}{c|}{\textbf{$h=0.72802$}} 
    & \multicolumn{3}{c}{\textbf{$h=0.41057$}}
    \\    \hline
    $p$ 
    & \textbf{DOFs} & $T = 5$ & $T = 10$ 
    & \textbf{DOFs} & $T = 5$ & $T = 10$ 
    \\ \hline
    $\boldsymbol{p=1}$ 
    & $90$   & $7.53\times10^{0}$  & $1.71\times10^{-1}$  
    & $300$  & $2.09\times10^{-1}$ & $1.05\times10^{-1}$ 
    \\ \hline
    $\boldsymbol{p=2}$
    & $180$  & $2.63\times10^{-2}$ & $1.83\times10^{-2}$ 
    & $600$  & $1.83\times10^{-3}$ & $2.66\times10^{-3}$ 
    \\ \hline
    $\boldsymbol{p=3}$
    & $300$  & $3.35\times10^{-2}$ & $1.99\times10^{-2}$ 
    & $1000$ & $9.24\times10^{-2}$ & $2.03\times10^{-1}$ 
    \\ \hline    
    $\boldsymbol{p=4}$
    & $450$  & $9.37\times10^{-3}$ & $4.24\times10^{-2}$ 
    & $1500$ & $2.63\times10^{-1}$ & $3.21\times10^{-1}$
    \\ \hline    
    \end{tabular}

    \begin{tabular}{c|c|C|C|c|C|C}
    \hline
    \multicolumn{7}{c}{\textbf{DG method \cite{corti:FK}}}  \\
    \hline
    \hline
    \textbf{Method}
    & \multicolumn{3}{c|}{\textbf{$h=0.72802$}} 
    & \multicolumn{3}{c}{\textbf{$h=0.41057$}}
    \\    \hline
    $p$  
    & \textbf{DOFs} & $T = 5$ & $T = 10$ 
    & \textbf{DOFs} & $T = 5$ & $T = 10$    
    \\ \hline
    $\boldsymbol{p=1}$ 
    & $90$   & $9.02\times10^{4}$ & $1.45\times10^{5}$ 
    & $300$  & $7.89\times10^{5}$ & $8.22\times10^{5}$ 
    \\ \hline
    $\boldsymbol{p=2}$
    & $180$  & $1.13\times10^{0}$  & $2.99\times10^{5}$
    & $600$  & $8.05\times10^{-1}$ & $2.56\times10^{5}$ 
    \\ \hline
    $\boldsymbol{p=3}$
    & $300$  & $4.76\times10^{-1}$ & $1.33\times10^{6}$
    & $1000$ & $1.05\times10^{-1}$ & $1.05\times10^{-1}$ 
    \\ \hline    
    $\boldsymbol{p=4}$
    & $450$  & $2.00\times10^{-1}$  & $3.17\times10^{-1}$   
    & $1500$ & $1.67\times10^{-2}$  & $1.47\times10^{-2}$
    \\ \hline    
    \end{tabular}
    \caption{Comparison of the computed errors in the DG-norm based on employing the proposed positivity-preserving scheme and the DG method of \cite{corti:FK}, for different polynomial degrees, different mesh sizes, and different final times.}
    \label{tab:errorstimerefDG}
\end{table}
\begin{figure}[t]
	\centering
	{\includegraphics[width=\textwidth]{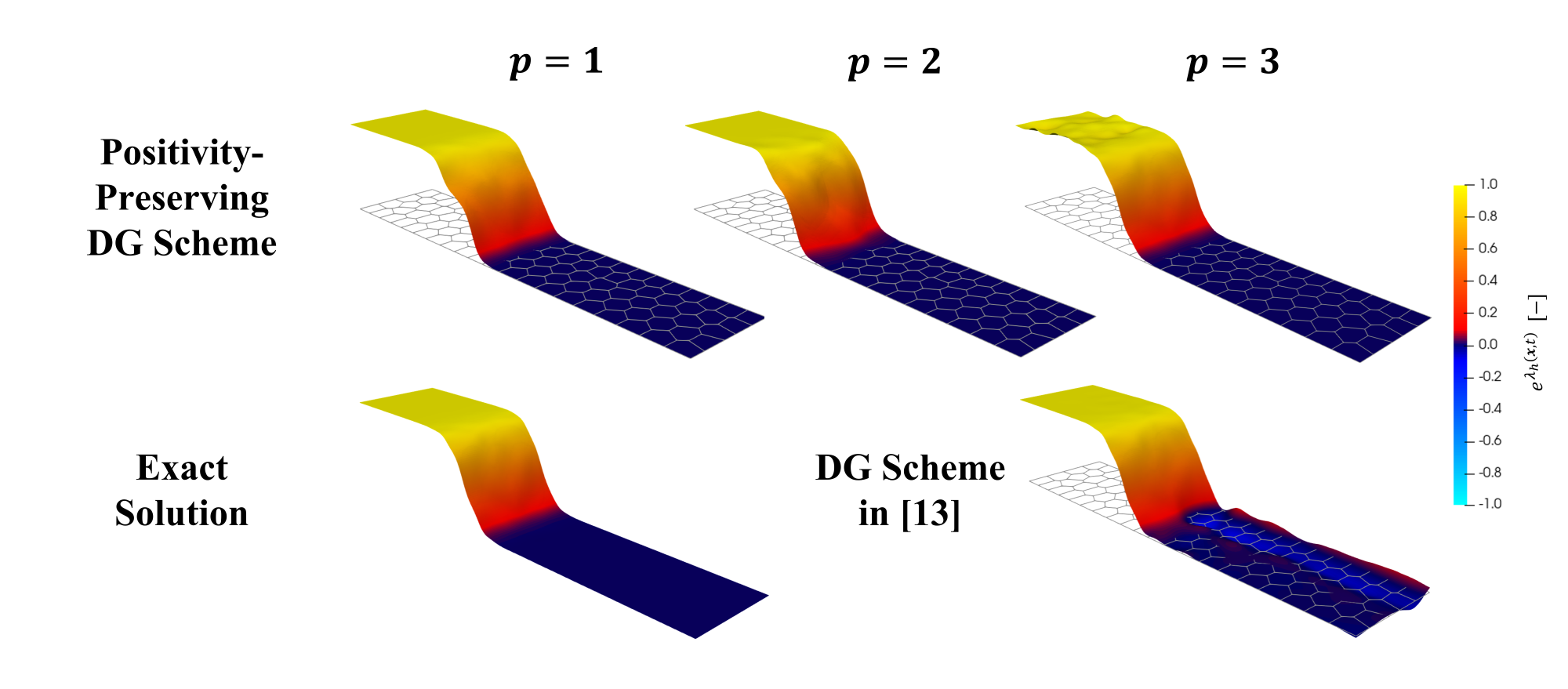}}
	\caption{Test case 2: Comparison of the numerical solutions computed both using the proposed positivity-preserving DG scheme and the DG method presented in \cite{corti:FK}, with the exact solution at time $T=10$.}
	\label{fig:waves2D}
\end{figure}
In Tables~\ref{tab:errorstimerefL2} and~\ref{tab:errorstimerefDG}, we report the computed errors in the $L^2$ and DG norms computed at the final times $T=5$ and $T=10$, respectively. In particular, we compare the results obtained by using our positivity-preserving method \eqref{eq:lambda_fk_fully} and the DG method proposed in \cite{corti:FK}, with a semi-implicit treatment of the nonlinearity and a penalty parameter $\eta=10$. We can observe that, also using low order polynomials $(p=1)$, our method is able to correctly represent the wave propagation front and leads to smaller errors (one order of magnitude). On the contrary, the method in \cite{corti:FK} fails to correctly simulate the wavefront because it does not preserve the positivity of the solution and the equilibrium $c=0$ is unstable. 
\par
Moreover, from the results of Table~\ref{tab:errorstimerefL2} and Table~\ref{tab:errorstimerefDG}, we can observe for $p=4$ and $T=10$ that the proposed positivity-preserving scheme does not lead to a reduction of the error compared with the results obtained for $p=3$. Indeed, we can observe in Figure~\ref{fig:waves2D} that for $p=3$ we have the formation of some small oscillations around the equilibrium $c=1$. This is probably due to Newton's iterations that might be badly conditioned for large values of polynomial degrees. The effect of this problem cannot be observed in the method of \cite{corti:FK}, but in this case, the positivity of the solution cannot be guaranteed.

\section{Numerical results: brain applications}
\label{sec:numericalresultsbrain}
In this section, we present the numerical results obtained in two different test cases: a two-dimensional simulation of a sagittal section of a brain and a three-dimensional simulation of brain geometries reconstructed from Magnetic Resonance Images (MRI).
\par
In the prions' spreading applications, the diffusion tensor is typically modelled as the superimposition of an extracellular diffusion effect with magnitude $d_\mathrm{ext}$ and an axonal diffusion with magnitude $d_\mathrm{axn}$ \cite{weickenmeierPhysicsbasedModelExplains2019}; for this reason, in this section, we assume that $\mathbf{D}$ has the following structure:
\begin{equation}
\label{eq:difftensor}
    \mathbf{D} = d_\mathrm{ext}\mathbf{I} + d_\mathrm{axn}(\boldsymbol{n}\otimes \boldsymbol{n}),
\end{equation}
where $\boldsymbol{n}=\boldsymbol{n}(\boldsymbol{x})$ is the axonal fibres direction in the point $\boldsymbol{x}\in\Omega$ and $d_\mathrm{ext}, d_\mathrm{axn} \geq 0$. The axonal direction is derived from Diffusion Weighted Imaging (DWI) and represents the principal orientation of the connections between the neurons (axons). Most of the spreading of the prions seems to happen through the axons \cite{weickenmeierPhysicsbasedModelExplains2019}, however, due to the brain structure, this is true only in white matter, while in grey matter, the diffusion can be considered to be isotropic. 
\par
In order to construct the axonal component of the diffusion tensor $\mathbf{D}$, we derive the diffusion tensor from DWI medical images by using Freesurfer and Nibabel \cite{Nibabel}. The principal eigenvector $\boldsymbol{n}$ of the tensor is then computed elementwise to find the diffusion tensor in Equation \eqref{eq:difftensor}. We refer to \cite{Mardal:Mesh} for more details on the reconstruction of $\mathbf{D}$ starting from medical images. Concerning the forcing term we fix $f=0$ and we impose homogeneous Neumann boundary conditions in both test cases.
\par
Concerning test case 3 in Section~\ref{sec:parkinson_test}, we simulate the spreading of $\alpha$-Synuclein in Parkinson's disease in a two-dimensional brain section. The simulation starts with a concentration of the misfolded proteins only at the base of the brainstem, so an initial stage of the pathology, and it requires many years ($\simeq 25$ years) of development. On the contrary, test case 4 in Section~\ref{sec:alzheimer_test}, refers to Alzheimers's disease in a three-dimensional brain. The initial concentration is diffused and derived from a Positron Emission Tomography (PET) image of an 83 years old patient with advanced pathological symptoms. 
\subsection{Test case 3: spreading of $\alpha$-Synuclein in a two-dimensional brain section}
\label{sec:parkinson_test}
\begin{figure}[t]
     \centering
     \begin{subfigure}[b]{0.32\textwidth}
         \centering
         \includegraphics[width=\textwidth]{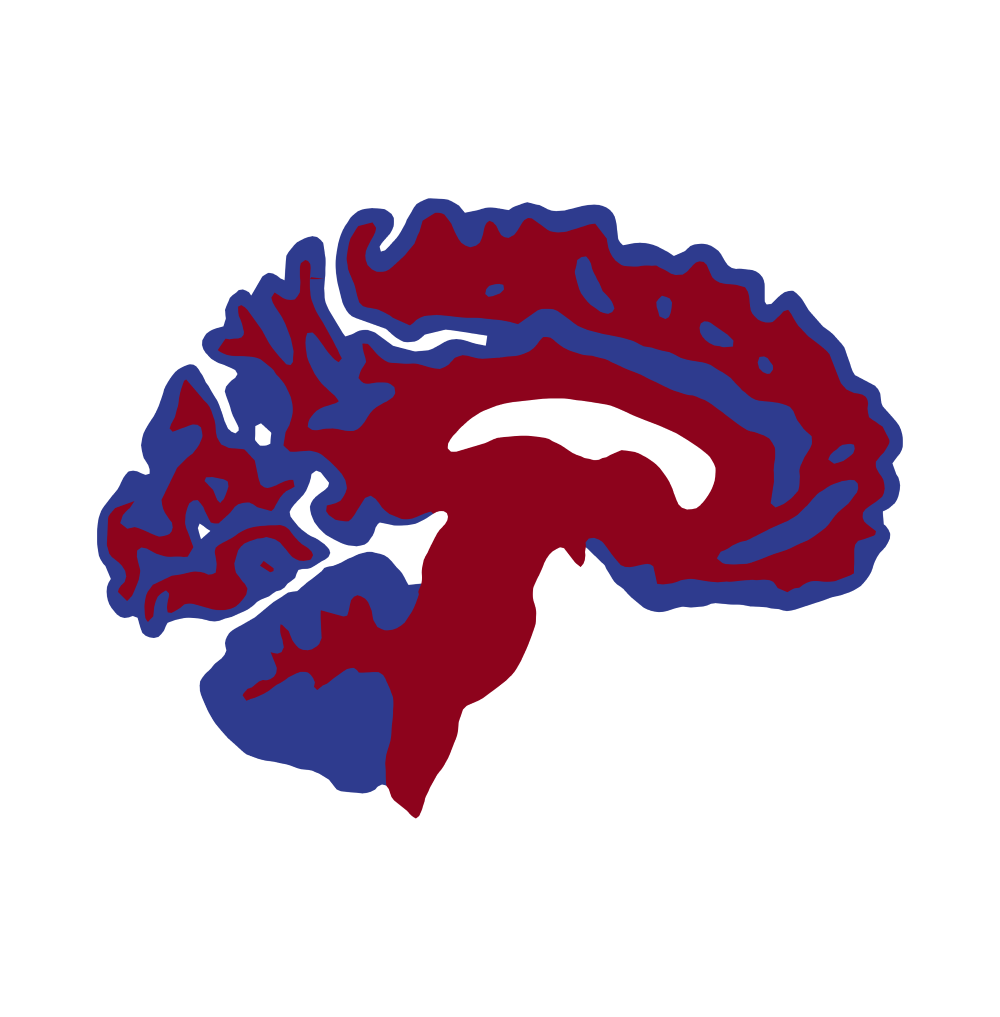}
         \caption{White/Gray matter subdivision}
         \label{fig:BrainSection:GMWM}
     \end{subfigure}
     \hfill
     \begin{subfigure}[b]{0.32\textwidth}
         \centering
         \includegraphics[width=\textwidth]{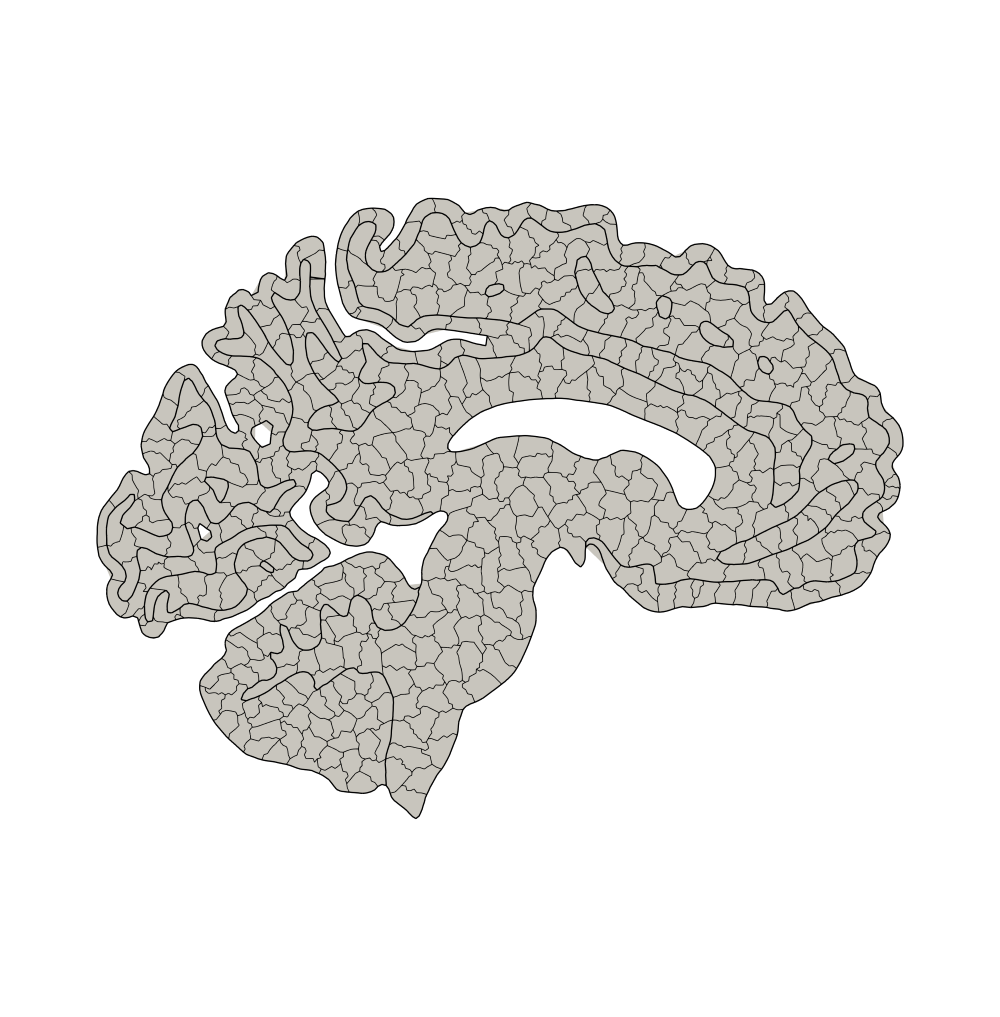}
         \caption{Mesh}
         \label{fig:BrainSection:Mesh}
    \end{subfigure}
    \hfill
    \begin{subfigure}[b]{0.32\textwidth}
         \centering
         \includegraphics[width=\textwidth]{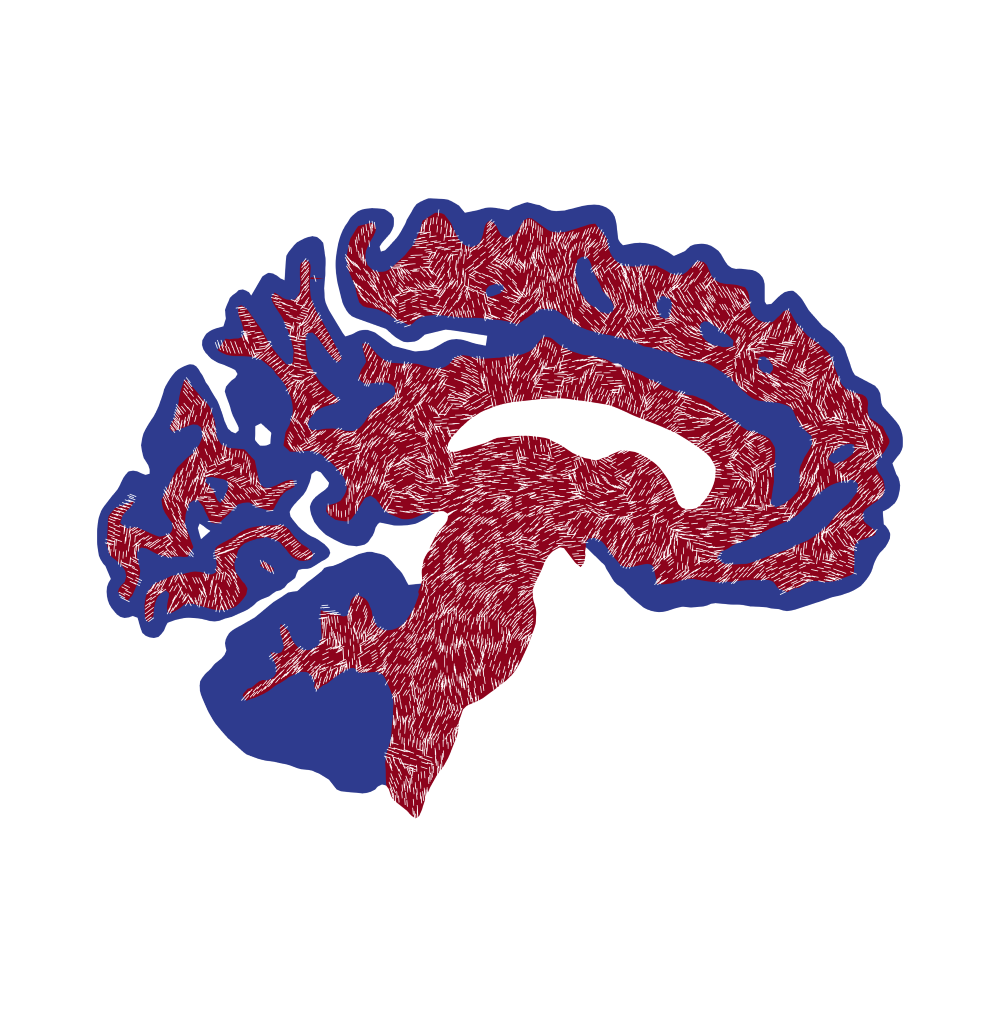}
         \caption{Axonal directions}
         \label{fig:BrainSection:Axn}
    \end{subfigure}
        \caption{Brain section with the specification of the white-gray matter subdivision (a), the agglomerated mesh (b), and the axonal directions (c).}
        \label{fig:BrainSection}
\end{figure}
In this section, we address a numerical simulation of the spreading of $\alpha$-Synuclein in Parkinson's disease on a polygonal agglomerated grid of a sagittal 2D brain section. The geometry is segmented from a structural MRI of a brain from the OASIS-3 database \cite{OASIS3} by means of Freesurfer \cite{Freesurfer}.
The construction of the final mesh of a slice of the brain is performed by using VMTK \cite{VMTK:antiga}. The resulting triangular mesh is composed of $43\,402$ triangles, and each element of the mesh is labelled to be in white or grey matter, according to the MRI segmentation, as in Figure~\ref{fig:BrainSection:GMWM}. However, the generality of the PolyDG method allows us to use mesh elements of any shape and the use of a smaller number of elements allows saving computational cost. For this reason, by using ParMETIS \cite{Parmetis}, we agglomerate the initial triangular mesh into a polygonal mesh of 534 elements, as shown in Figure~\ref{fig:BrainSection:Mesh}. In particular, the agglomeration procedure is performed in a segregated way for the white and the grey matter, in this way we are sure to correctly describe both the domain boundary and the interface between grey/white matters. Finally, in Figure~\ref{fig:BrainSection:Axn}, we report the axonal directions computed in the white matter starting from DWI.
\par
Concerning the physical parameters, we fix the reaction coefficient $\alpha = 0.45/\mathrm{year}$ in grey matter, and  $\alpha = 0.9/\mathrm{year}$ in white matter \cite{schaferInterplayBiochemicalBiomechanical2019}. Moreover, we impose a constant isotropic diffusion $d_\mathrm{ext} = 8\,\mathrm{mm}^2/\mathrm{year}$, and axonal diffusion which is 10 times faster than the isotropic one in the white matter ($d_\mathrm{axn} = 80\,\mathrm{mm}^2/\mathrm{year}$) and is negligible in the grey matter ($d_\mathrm{axn} = 0\,\mathrm{mm}^2/\mathrm{year}$) \cite{schaferInterplayBiochemicalBiomechanical2019}. In this simulation, we fix $\Delta t=0.01\,\mathrm{years}$ and $p=1$, moreover the penalty parameter $\eta_0=1$.
\begin{figure}[t!]
    \centering
    {\includegraphics[width=\textwidth]{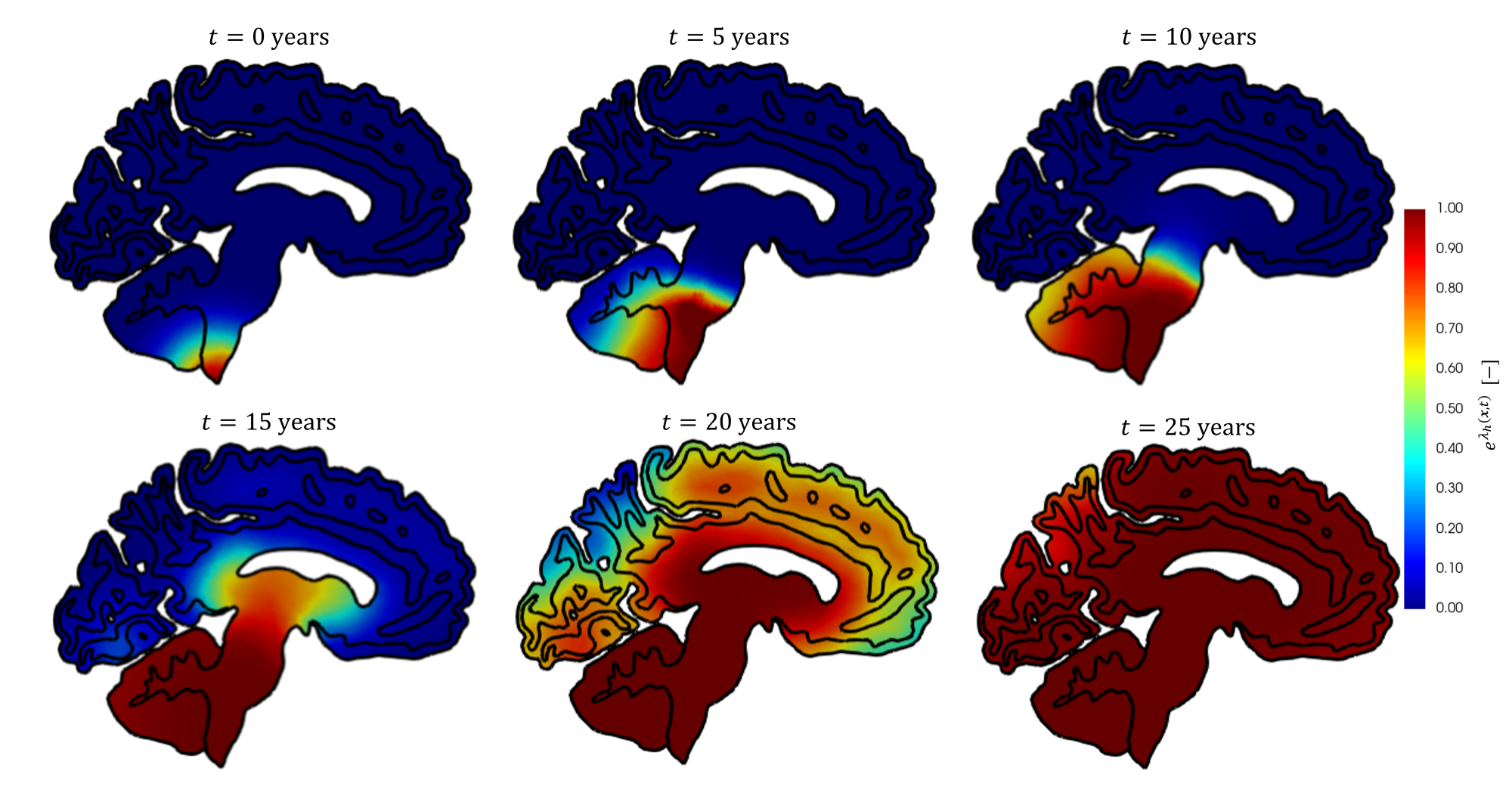}}
    \caption{Patterns of $\alpha$-synuclein concentration at different stages of the pathology.} 
    \label{fig:Solution2DPark}
\end{figure}
\par
The simulation of $\alpha$-Synuclein diffusion in Parkinson’s disease starts from an initial condition, with concentration located in the dorsal motor nucleus \cite{braakStagingBrainPathology2003}. In Figure~\ref{fig:Solution2DPark}, we report both the initial condition and the computed solution at different times $t=0,5,10,15,20,25$ years. First of all, it can be observed that the directions of protein propagations are coherent with the medical literature \cite{Goedert2015}. Indeed, the activation of brain regions follows the Braak staging theory \cite{braakStagingBrainPathology2003}. Moreover, we can notice that the heterogeneity of the reaction parameters causes an earlier activation of the white matter in general, which is clearly visible in the frontal cortex at time $t=20$. By making a comparison with the literature results of \cite{corti:FK}, we have that the reduced reactivity and diffusion inside grey matter causes a slowing of the disease progression times, starting with the same initial condition and an agglomerated mesh with comparable refinement level.

\begin{figure}[t!]
    \begin{subfigure}[b]{0.65\textwidth}
          \resizebox{\textwidth}{!}{\input{Images/Concentrations2D.tikz}}         
          \caption{Mean value of the concentration inside the brain, white, and grey matter}
         \label{fig:Indicator2D:AvgConcentrations}
    \end{subfigure}%
    \begin{subfigure}[b]{0.35\textwidth}
        \includegraphics[width=\textwidth]{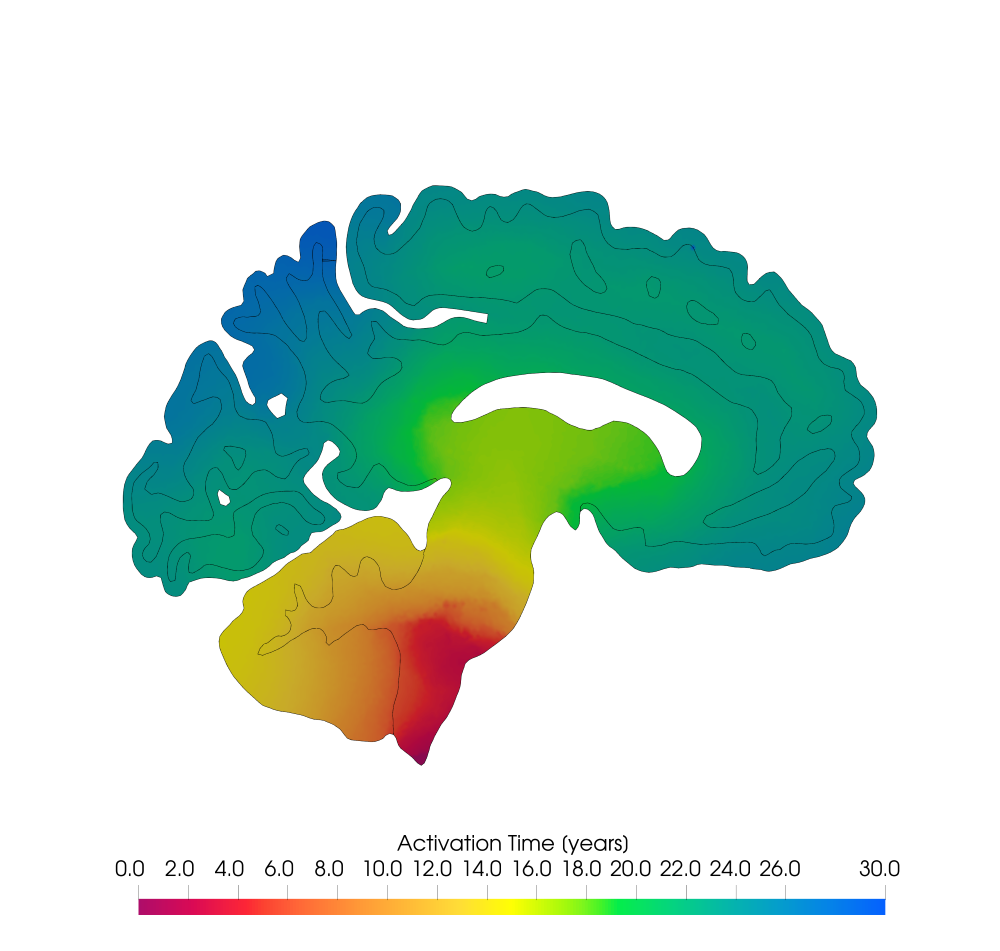}         
        \caption{Activation time on the brain section}
         \label{fig:Indicator2D:ActTimeWhite}
    \end{subfigure}%
    \caption{Test case 3: mean value of the concentration (a) and activation time (b).}
    \label{fig:Indicator2D}
\end{figure}
\par
In Figure~\ref{fig:Indicator2D:AvgConcentrations}, we report the average concentration of misfolded protein $\overline{e^{\lambda_h(t)}}$ inside the brain with respect to the time $t$. Moreover, we compute the average concentrations in white and grey matter separately. As we can observe, in the first years, the increase in the concentration is almost equivalent for the two regions, after 14 years we have a clear distinction. In particular, the higher reactivity and diffusion of the white matter tissue causes a faster increase in the concentration. Moreover, we compute the activation time of the pathology as:
\begin{equation}
\label{eq:acttime}
    \hat{t}(\boldsymbol{x},t) = \chi_{\{e^{\lambda_h(\boldsymbol{x},t)}>c_\mathrm{crit}\}} (\boldsymbol{x},t) \qquad \boldsymbol{x}\in \Omega \quad t\in[0,T],
\end{equation}
where $\chi$ is the indicator function and $c_\mathrm{crit}=0.95$ is the critical value of $\alpha$-Synuclein concentration. We report the computed activation time in Figure~\ref{fig:Indicator2D:ActTimeWhite}. From a pathological perspective, high concentrations of $\alpha$-Synuclein alter the electric signal transport. The indicator \eqref{eq:acttime} measures the time after which a region of the brain can be affected by pathological electric stimuli. The result is qualitatively similar to the literature results \cite{weickenmeierPhysicsbasedModelExplains2019, corti:FK}. Comparing the result with respect to \cite{corti:FK}, we can notice a longer activation time, due to the reduced reactivity and diffusion in grey matter, introduced in this work.

\subsection{Test case 4: spreading of Amyloid-$\beta$ in a three-dimensional brain}
\label{sec:alzheimer_test}
In this section, we present a numerical simulation of the spreading of the Amyloid-$\beta$ on a three-dimensional domain, reconstructed starting from an MRI taken from OASIS-3 database \cite{OASIS3}. The medical images are associated with an 83 years old patient, which is diagnosed to be affected by Alzheimer's disease at the moment of the acquisition. The geometry is segmented by means of Freesurfer \cite{Freesurfer} and then is used to construct a mesh grid of 323'014 tetrahedral elements, using SVMTK library \cite{Mardal:Mesh}. The resulting mesh is reported in Figure~\ref{fig:3D:Mesh}. The problem is solved with the use of a FEniCS code \cite{FenicsCode} (version 2019).
\par
\begin{figure}[t!]
    \begin{subfigure}[b]{0.4\textwidth}
	   {\includegraphics[width=\textwidth]{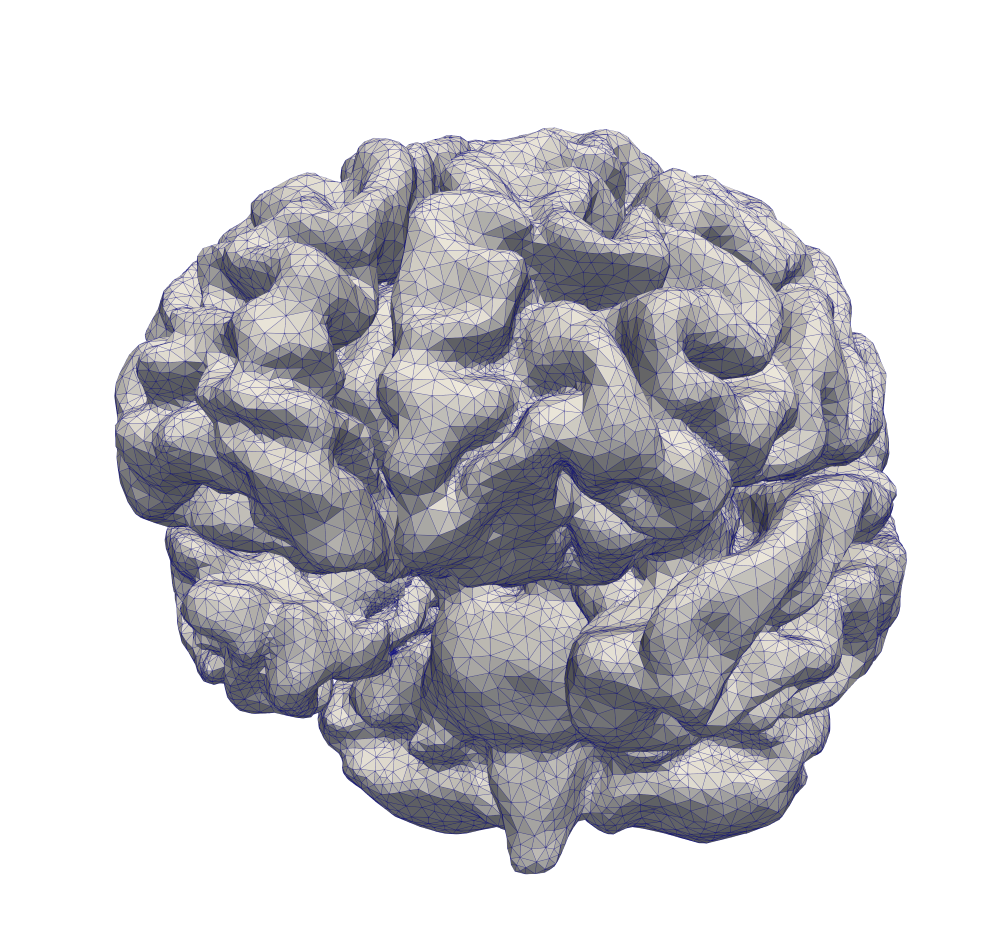}}       
          \caption{Three-dimensional tetrahedral brain mesh}
         \label{fig:3D:Mesh}
    \end{subfigure}%
    \begin{subfigure}[b]{0.6\textwidth}
        \includegraphics[width=\textwidth]{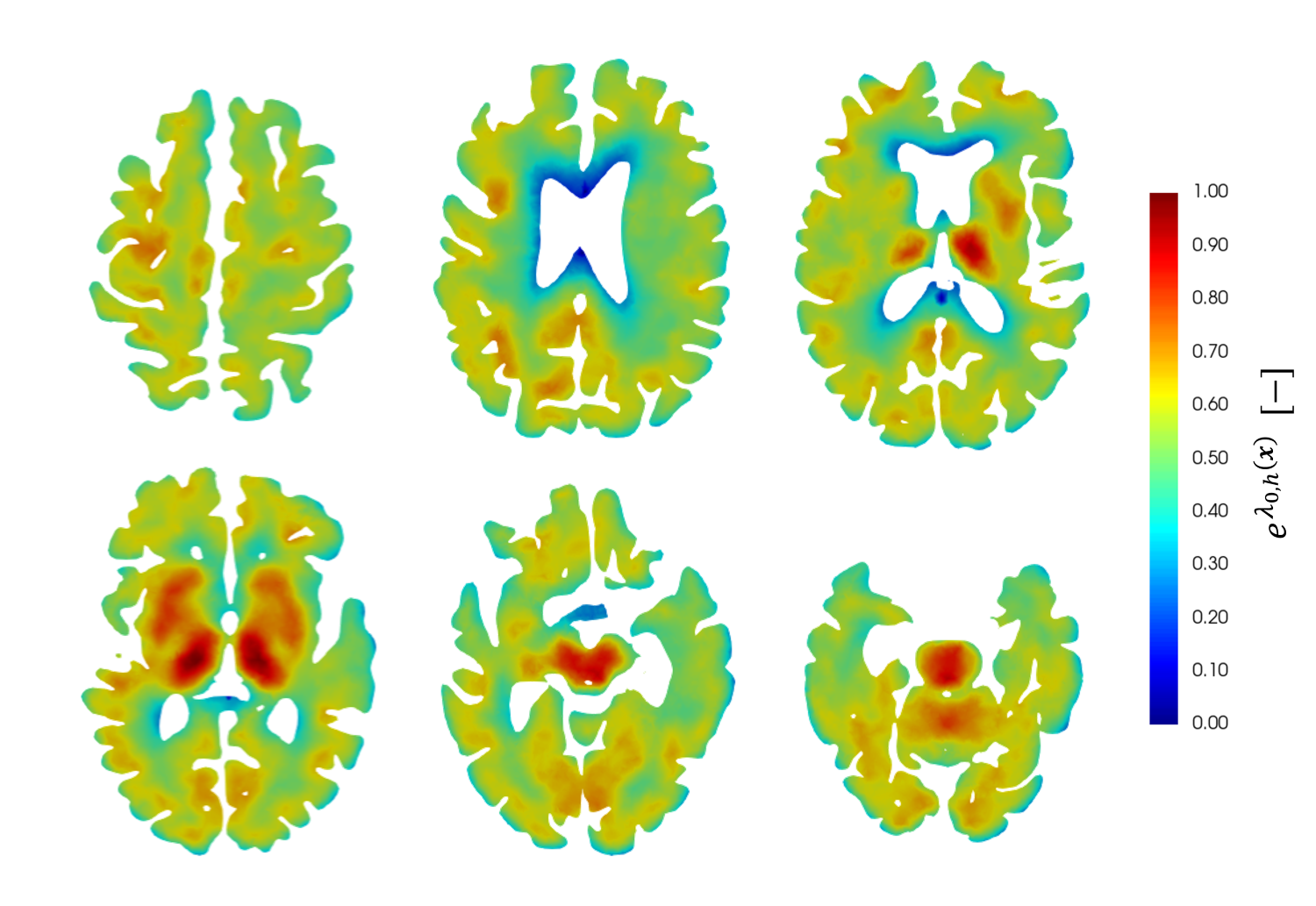}         
        \caption{Initial condition from PET images on brain slices in horizontal plane}
         \label{fig:3D:PET}
    \end{subfigure}%
    \caption{Test case 4: three-dimensional brain mesh (a) and projected initial condition from PET image (b).}
    \label{fig:3D}
\end{figure}
\par
Concerning the parameters of the model, in this test case, for simplicity, we do not make any distinction between white and grey matters, choosing $\alpha = 0.9/\mathrm{year}$, $d_\mathrm{ext} = 8\,\mathrm{mm}^3/\mathrm{year}$, and $d_\mathrm{axn} = 80\,\mathrm{mm}^3/\mathrm{year}$ \cite{schaferInterplayBiochemicalBiomechanical2019, corti:FK}. 
\par
To set up the initial condition for the FK problem in a patient-specific setting, we estimate the function $\lambda_0(\boldsymbol{x})$ of Amyloid-$\beta$ protein at the initial time $t=0$. To do that, we project the clinical data derived from PET images with Pittsburgh compound B (PET-PiB) \cite{vanoostveenImagingTechniquesAlzheimer2021}. The PET-PiB adopts a radioligand, which identifies the presence of Amyloid-$\beta$ plaques inside the brain parenchyma (for the specifics about the acquisition techniques of the image used in this work we refer to \cite{OASIS3}). We report the result of the initial concentration rescaled between 0 and 1 and projected on the mesh grid in Figure~\ref{fig:3D:PET}.  In particular, we can observe the presence of large damaged regions $(c\simeq 1)$ in the brainstem and in the thalamus.
\par
\begin{figure}[t!]
    \includegraphics[width=\textwidth]{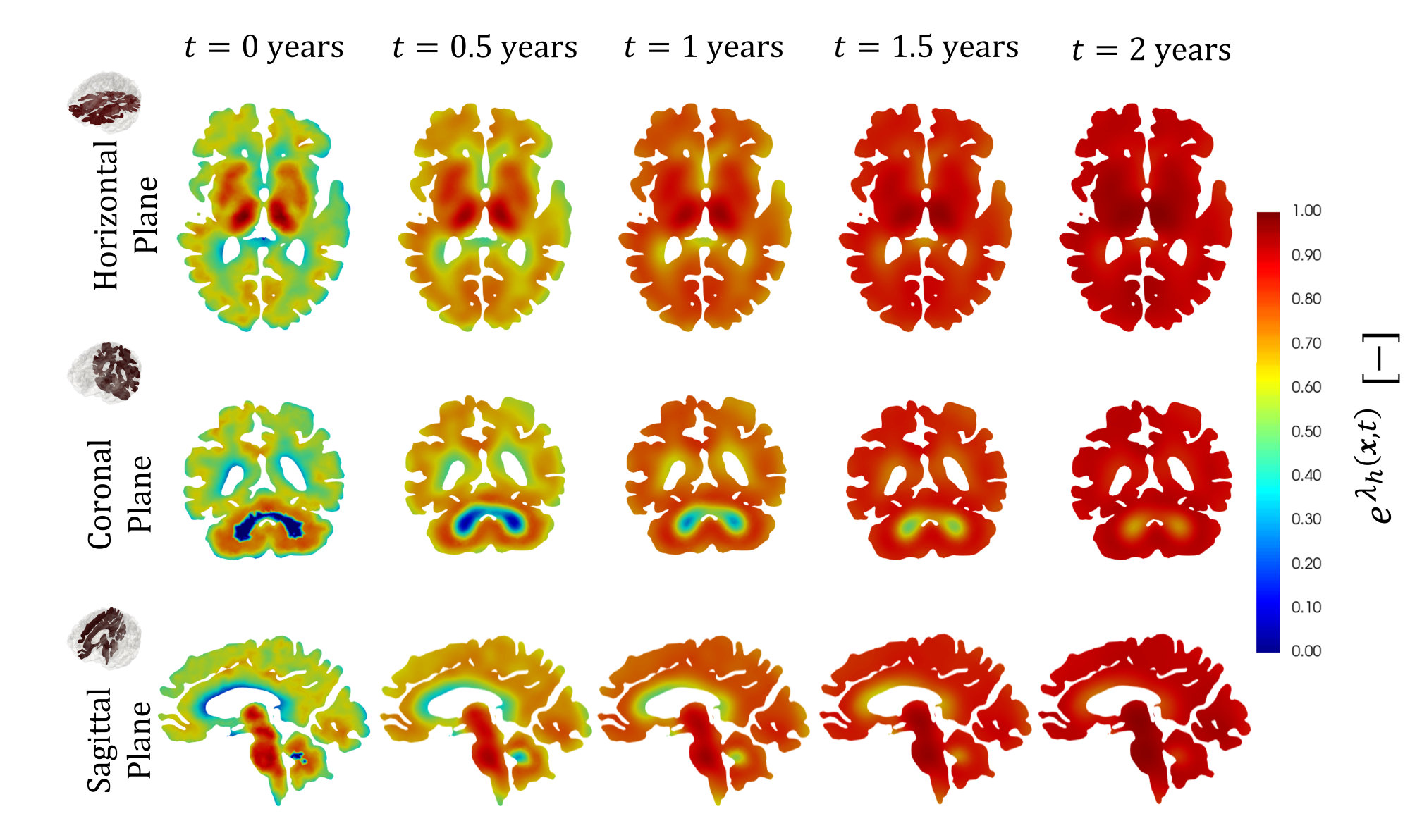}         
    \caption{Test case 4: Patterns of Amyloid-$\beta$ concentration along horizontal (above), coronal (middle), and sagittal (under) planes at different times ($t=0,0.5,1,1.5,2$ years).}
    \label{fig:3DSolution}
\end{figure}
\par
Starting from pathology in an advanced state, we set up a simulation with a final time $T=2$ years and a timestep $\Delta t = 0.01$ years. Concerning the space discretization we adopt the DG method for $p=1$. The nonlinear solver for the resulting system is based on the relaxed Newton method with absolute tolerance equal to $10^{-10}$ and relaxation parameter $\omega=0.75$.
\par
The results are reported in Figure~\ref{fig:3DSolution} for different times ($t=0,0.5,1,1.5,2$ years). The solution is visualized on many slices inside the brain geometry on the three different planes: horizontal, coronal and sagittal. The results show a propagation of the Amyloid-$\beta$ concentration inside the parenchyma, following the typical paths of the pathology \cite{vanoostveenImagingTechniquesAlzheimer2021}. In particular, we can observe a late activation of the cerebellum in the slice along the coronal plane of Figure~\ref{fig:3DSolution} (middle line). This is coherent with Braak's stages of Alzheimer's pathology, which show the presence of Amyloid-$\beta$ accumulation only in the last stages of the pathological development \cite{koychev-amyloid-tau}. Moreover, coherently to the clinical stage of the pathology we are simulating (due to the presence of evident symptoms from the patient's documentation), we can find a generalised misfolding after a few years from the PET acquisition and this is also coherent with what we expected in the disease evolution \cite{vanoostveenImagingTechniquesAlzheimer2021}.

\section{Conclusions}
\label{sec:conclusion}
In this work, we have proposed a positivity-preserving DG method on polygonal and polyhedral grids for the solution of the FK model. The main applicative motivation is the modelling of neurodegeneration caused by the spreading of prionic proteins, such as $\alpha$-synuclein protein in Parkinson's disease and amyloid-$\beta$ in Alzheimer's disease. We have analyzed the existence of the discrete solution by means of the Leray-Schauder theorem and we have discussed the convergence of the numerical scheme.
\par
Numerical tests have been presented both in two and three dimensions. In particular, we have analyzed the convergence in space both with respect to the mesh size and the polynomial order of the method on polygonal grids. Then, we have discussed the convergence in time, by making a comparison between implicit Euler and Cranck-Nicolson schemes. Finally, we have performed a numerical simulation to test the capabilities of the proposed formulation to approximate propagating wavefronts in two dimensions. In this test, we have compared the proposed positivity-preserving method with the polyDG method introduced in \cite{corti:FK}, highlighting the advantages and disadvantages of both formulations.
\par
Finally, we have presented two applications of the proposed scheme in the framework of neurodegenerative diseases. In particuarl, we have performed a simulation of $\alpha$-synuclein spreading on a slice of a real brain in the sagittal plane, constructing a polygonal agglomerated mesh that preserves the quality of both domain boundaries and the interface between white matter and grey matter. Moreover, starting from initial amyloid-$\beta$ concentrations derived from PET images, we have simulated the spreading of amyloid-$\beta$ in a three-dimensional brain in a patient-specific Alzheimer's disease setting. The results obtained in both patient-specific settings are coherent with the clinical literature, showing that the proposed approach is a valuable instrument that can be employed for patient-specific computed-assisted simulations of the evolution of Parkinson's and Alzheimer's neurodegenerative disorders. 
\par
A possible future development of this work consists in extending  the convergence analysis to the general $\vartheta$-method, by proving a discrete entropy decay. Another possibility can be the use of PET images at different times of the disease to calibrate the physical parameters of the Fisher-Kolmogorov model, for example by means of inverse uncertainty quantification methods.

\section*{Acknowledgments}
PFA has been partially funded by the research grants PRIN2017 n. 201744KLJL funded by MUR and PRIN2020 n. 20204LN5N5 funded by MUR. PFA has been partially funded by European Union - Next Generation EU. FB is partially funded by “INdAM - GNCS Project”, codice CUP E53C22001930001. MC, FB and PFA are members of INdAM-GNCS. The brain MRI images were provided by OASIS-3: Longitudinal Multimodal Neuroimaging: Principal Investigators: T. Benzinger, D. Marcus, J. Morris; NIH P30 AG066444, P50 AG00561, P30 NS09857781, P01 AG026276, P01 AG003991, R01 AG043434, UL1 TR000448, R01 EB009352. AV-45 doses were provided by Avid Radiopharmaceuticals, a wholly-owned subsidiary of Eli Lilly.

\section*{Declaration of competing interests}
The authors declare that they have no known competing financial interests or personal relationships that could have appeared to influence the work reported in this article.
\bibliographystyle{hieeetr}
\bibliography{sample.bib}
\end{document}